\documentclass[11pt]{amsart}
\usepackage[T1]{fontenc} 
\usepackage{amsmath,amsthm,amsfonts,amssymb,bm,graphicx,xcolor,mathtools, a4wide}

\usepackage
{hyperref}
\hypersetup{colorlinks=true,citecolor=blue,linkcolor=blue,urlcolor=blue,
pdfstartview=FitH }

\usepackage{parskip}
\numberwithin{equation}{section}

\theoremstyle{plain}
\newtheorem{theorem}{Theorem}
\newtheorem{cor}[theorem]{Corollary}
\newtheorem{lemma}{Lemma}

\mathtoolsset{showonlyrefs=true}
\theoremstyle{definition}

\newtheorem{remark}{Remark}

\renewcommand{\geq}{\geqslant}
\renewcommand{\leq}{\le}

\newcommand{\Z}{\mathbb{Z}}

\newcommand{\R}{\mathbb{R}}
\newcommand{\C}{\mathbb{C}}

\newcommand{\p}{\mathbf{p}}

\newcommand{\re}{\operatorname{Re}}

\newcommand{\tr}{\operatorname{tr}}

\newcommand{\SO}{\operatorname{SO}}
\newcommand{\Sp}{\operatorname{Sp}}

\newcommand{\bdd}{\begin{center}\begin{tikzcd}}
\newcommand{\bd}{\begin{tikzcd}}
\newcommand{\edd}{\end{tikzcd}\end{center}}
\newcommand{\ed}{\end{tikzcd}}
\newcommand{\bdp}{\begin{center}\begin{tikzpicture}}
\newcommand{\edp}{\end{tikzpicture}\end{center}}
\newcommand{\bi}{\begin{itemize}}
\newcommand{\ei}{\end{itemize}}
\newcommand{\bt}{\begin{tikzpicture}}
\newcommand{\et}{\end{tikzpicture}}
\newcommand{\ba}{\[\begin{aligned}}
\newcommand{\ea}{\end{aligned}\]}
\newcommand{\bp}{\begin{pmatrix}}
\newcommand{\ep}{\end{pmatrix}}
\newcommand{\bv}{\begin{vmatrix}}
\newcommand{\ev}{\end{vmatrix}}
\newcommand{\bb}{\begin{bmatrix}}
\newcommand{\eb}{\end{bmatrix}}
\newcommand{\bB}{\begin{Bmatrix}}
\newcommand{\eB}{\end{Bmatrix}}
\newcommand{\bea}{\begin{enumerate}[label=(\alph*)]}
\newcommand{\ber}{\begin{enumerate}[label=(\roman*)]}
\newcommand{\ben}{\begin{enumerate}[label=(\arabic*)]}
\newcommand{\ee}{\end{enumerate}}

\numberwithin{equation}{section}

\makeatletter
\def\Ddots{\mathinner{\mkern1mu\raise\p@
\vbox{\kern7\p@\hbox{.}}\mkern2mu
\raise4\p@\hbox{.}\mkern2mu\raise7\p@\hbox{.}\mkern1mu}}
\makeatother
\makeatletter
\DeclareRobustCommand\widecheck[1]{{\mathpalette\@widecheck{#1}}}
\def\@widecheck#1#2{%
    \setbox\z@\hbox{\m@th$#1#2$}%
    \setbox\tw@\hbox{\m@th$#1%
       \widehat{%
          \vrule\@width\z@\@height\ht\z@
          \vrule\@height\z@\@width\wd\z@}$}%
    \dp\tw@-\ht\z@
    \@tempdima\ht\z@ \advance\@tempdima2\ht\tw@ \divide\@tempdima\thr@@
    \setbox\tw@\hbox{%
       \raise\@tempdima\hbox{\scalebox{1}[-1]{\lower\@tempdima\box
\tw@}}}%
    {\ooalign{\box\tw@ \cr \box\z@}}}
\makeatother

\begin{document}

\author{Valentin Blomer}
\address{Mathematisches Institut, Endenicher Allee 60, 53115 Bonn, Germany}
\email{blomer@math.uni-bonn.de}
 
\author{Soumya Das}
\address{Department of Mathematics, Indian Institute of Science, Bangalore - 560012, India}
\email{soumya@iisc.ac.in}

 \title{Moments and joint nonvanishing of symplectic $L$-functions}


\keywords{Standard $L$-functions, spectral summation formula, Siegel modular forms, moments, non-vanishing}

\begin{abstract} We compute an asymptotic formula for a moment involving the spinor and the standard $L$-functions for holomorphic Siegel cusp forms of degree two and large weight $k$. Applications include simultaneous non-vanishing statements and lower bounds for second moments. 
\end{abstract}

\subjclass[2020]{11F46, 11F66, 11F72}

\setcounter{tocdepth}{2}  \maketitle 

\maketitle

\section{Introduction}
 
 \subsection{Statement of results}
 Let $F$ be a holomorphic Siegel cusp form of weight $k$ and genus $2$ for the group $ \Sp(4,\Z) $, which is an eigenfunction of the Hecke algebra.  Attached to $F$ are two basic $L$-functions:
 \begin{itemize}
 \item the spinor $L$-function $L(s, F)$ of degree four and conductor $k^2$;
 \item the standard $L$-function $D(s,F)$ of degree five and conductor $k^4$.
 \end{itemize}
 
  This paper contributes to the asymptotic evaluation of moments of these $L$-functions by means of a suitable spectral summation formula in a higher rank situation. 
  For the symplectic group $\Sp(4,\Z)$, the relevant formula to average (abelian) Fourier coefficients over the space of cusp  forms of weight $k$ was worked out by Kitaoka \cite{Ki} as an analogue of the well-known Petersson formula for elliptic modular forms. It is a challenge and interesting in its own right to analyze the (matrix) Kloosterman sums and (matrix) Bessel functions appearing on the arithmetic side of this formula. 
  A first template for this kind of analysis in the context of spinor $L$-functions was given in \cite{Bl}. 
  
  Not only because of its conductor and degree, but also because of its Dirichlet series expansion (see \eqref{DF}), the standard $L$-function is in many respects the more complicated of the two $L$-functions, for which to the best of our knowledge no moment results exist. 
  Our basic motivation is a first moment of $D(1/2, F)$ over an eigenbasis $\mathcal{B}^{(2)}_k$ of the space of weight $k$ Siegel cusp forms, which has cardinality of size $
\asymp k^3$. Since we apply Kitaoka's version of the Petersson formula \cite{Ki}, this comes naturally with ``harmonic weights''
$$\omega_F := \frac{\pi^{1/2}}{4} (4\pi)^{3 - 2k} \Gamma(k-3/2)\Gamma(k-2) \frac{a_F(I)^2}{\| F \|^2}. $$
Here, as usual, we denote the Fourier coefficients of $F$ by $a_F(M)$ for $M \in \mathcal{S}$, the set of half-integral symmetric $2$-by-$2$ matrix with integral diagonal (see Section \ref{sec21} for the normalization of $a_F(M)$).
These weights satisfy (\cite[(1.8)]{Bl})
\[
\sum_{F \in \mathcal{B}_k^{(2)}} \omega_F = 1 + O(e^{-k}).
\]
By B\"ocherer's conjecture, proved by Furusawa and Morimoto \cite{FS}, they are themselves special values of $L$-functions: if $F$ is not a Saito-Kurokawa lift, then (cf.\ \cite[Theorem 1.2]{DPSS})

\begin{equation}\label{dpssth12}
 \omega_F = \frac{128\pi^5 \Gamma(2k-4)}{\Gamma(2k-1)} \frac{L(1/2, F)  L(1/2, F \times \chi_{-4} )    }{L(1, \pi_F, \text{Ad})}.
 \end{equation}
 This  naturally leads to a mixed moment involving both standard and spinor $L$-functions. We have denoted by  $\chi_{-4}$   the non-trivial Dirichlet character modulo $4$. Analogous results as Theorem \ref{thm1} exist for any other (fixed) primitive  quadratic character. 
 
 \begin{theorem}\label{thm1} We have
 \[
 \mathcal{M}_k := \sum_{F \in \mathcal{B}_k^{(2)}} \omega_F D(1/2,F) = 2 L(3/2, \chi_{-4}) \log k  + c_0 + O(k^{-\eta})
\]
for any $\eta < 1/4$ with
\[c_0 = L(3/2, \chi_{-4}) \Big(\frac{3}{2} \gamma - \frac{\pi}{4} -\frac{7}{2} \log 2 - \frac{5}{2} \log \pi\Big) + L'(3/2, \chi_{-4}).
\]
The contribution of the lifts is negligible: if $\tilde{\mathcal{B}}_k^{(2)}$ denotes an eigen-basis of non-lifts, then
\begin{equation}\label{thm1eq}
\frac{16\pi^5}{k^3}\sum_{F \in \tilde{\mathcal{B}}_k^{(2)}} \frac{ L(1/2, F)  L(1/2, F \times \chi_{-4} )    D(1/2, F) }{L(1, \pi_F,\text{{\rm Ad}})}  = 2 L_{\chi_{-4}}(3/2) \log k  + c_0 + O(k^{-\eta}).
\end{equation}
\end{theorem}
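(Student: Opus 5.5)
We would start from an approximate functional equation for $D(1/2,F)$. The standard $L$-function has degree five, conductor $\asymp k^4$ and root number $+1$, so
\[
D(1/2,F) = 2\sum_{n} \frac{\lambda_D(n)}{n^{1/2}} V\Big(\frac{n}{k^2}\Big),
\]
where $\lambda_D(n)$ are its Dirichlet coefficients and $V$ is built from the archimedean factor $\Gamma_\R(s)\prod_{j=1,2}\Gamma_\C(s+k-j)$. Next we express $\lambda_D(n)$ through Hecke eigenvalues via the Dirichlet series identity \eqref{DF}. Concretely, $D(s,F)$ times a zeta factor equals a Dirichlet series in the eigenvalues $\lambda_F(m^2)$ and similar coefficients; by Andrianov's theory these are Fourier coefficients $a_F(m\cdot I)$ or $a_F(M)$ for suitable $M$ of determinant related to $m$. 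This rewrites $\mathcal{M}_k$ as a weighted sum over $n$ of expressions of the form $\sum_F \omega_F\, a_F(I)a_F(M)/a_F(I)^2$, suitably normalized. Kitaoka's Petersson formula \cite{Ki} then applies with one matrix fixed at $I$.

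Kitaoka's formula produces three kinds of terms: a diagonal term from $M\sim I$ modulo $\mathrm{GL}_2(\Z)$, rank-one Kloosterman terms, and rank-two Kloosterman terms, each carrying matrix Bessel functions.

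\textbf{Diagonal term.} Summing the diagonal over $n$ gives $\frac{1}{2\pi i}\int \mathcal{D}(1/2+s)\, G(s)\, k^{2s}\,ds/s$. Here $\mathcal{D}$ is an explicit Euler product. The $L(3/2,\chi_{-4})$ should come from counting the classes $M$ of discriminant $-4m^2$ equivalent to $mI$, i.e.\ class numbers of orders in $\Q(i)$, and $\mathcal{D}$ should have a simple zeta pole at $s=0$, giving a double pole. Shifting contours yields the residue $2L(3/2,\chi_{-4})\log k + c_0$. The constants $\gamma$, $\log 2$, $\log\pi$ and $\pi/4$ come from the Laurent expansion of $\zeta$, the gamma factors (digamma values), and the local factor at $2$.

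\textbf{Off-diagonal terms.} For these we follow the template of \cite{Bl}. The Bessel functions are exponentially small unless the argument is $\gg k$, and the effective length $n\ll k^{2+\varepsilon}$ keeps most terms negligible. The rank-one terms reduce to classical $J_{k-3/2}$ sums with Salié/Kloosterman sums. The rank-two terms involve the matrix Bessel function, handled by Kitaoka's integral representation. We expect the \textbf{main obstacle} to be the transition range, where the Bessel arguments are of size about $k$ and we must bound the matrix Kloosterman sums nontrivially. The error $k^{-\eta}$ with $\eta<1/4$ suggests the saving comes from uniform Bessel asymptotics near the transition combined with Weil-type bounds on the Kloosterman sums. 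If this is too weak, we would first try smoothing in $n$ and using stationary phase on the Bessel integral.

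\textbf{Lifts and the non-lift statement.} Saito–Kurokawa lifts number about $k$. For a lift $F$, $a_F(I)^2/\|F\|^2$ is given by Kohnen–Zagier in terms of $L(1/2, f\times\chi_{-4})$-type values, and $D(1/2,F)$ factors into lower-degree $L$-values of the underlying elliptic form $f$. Convexity bounds then make their total contribution $O(k^{-1+\varepsilon})$. Finally, substituting \eqref{dpssth12} and using $\Gamma(2k-4)/\Gamma(2k-1)=(8k^3)^{-1}(1+O(1/k))$ gives \eqref{thm1eq}.
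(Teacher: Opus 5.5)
\textbf{There is a genuine gap: the off-diagonal terms.} Your framework matches the paper: approximate functional equation, Kitaoka's formula with $T=I$, the diagonal via a contour shift through a double pole, separate treatment of the lifts, and conversion via \eqref{dpssth12}. But the off-diagonal analysis, where essentially all the work lies, is missing. The mechanism you propose for it would not give a power saving.

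For the rank one term, grant square-root cancellation in the two-dimensional sums $H^{\pm}$ (this is not off-the-shelf Weil; the paper proves it as Lemma~\ref{exp}) and $J_\ell(x)\ll x^{-1/2}$ on the range $x\gg k$. The trivial estimate is then still essentially $O(1)$, which just fails, as the paper notes in Section~\ref{rank1}. So unlike in \cite{Bl}, ``Weil-type bounds plus uniform Bessel asymptotics'' cannot give $k^{-\eta}$. The missing steps are:
\begin{enumerate}
\item Split $J_\ell=J_\ell^{(1)}+J_\ell^{(2)}$ as in \eqref{bes}, and bound the $J_\ell^{(2)}$ part trivially via \eqref{J2}.
\item For $J_\ell^{(1)}$, apply Poisson summation in $\Delta=\det M$ over residue classes modulo $c$. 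The dual variable is then short ($h\preccurlyeq 1$).
\item Treat the remaining $(x,\theta)$-integral separately for $|\theta-\pi/2|>k^{-\eta}$, which gives $k^{\eta-1/2}$, and near the stationary point $\theta=\pi/2$, which gives $k^{-\eta}$. Balancing these produces the exponent $1/4$.
\end{enumerate}

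For the rank two term, your diagnosis is also off. The Bessel function is negligible unless the smallest eigenvalue of its matrix argument is $\gg k^2$, and $\det M\preccurlyeq k^2$. Together these force $|\det C|\preccurlyeq 1$. So the matrix Kloosterman sum has bounded modulus and nothing is gained by bounding it nontrivially. Meanwhile the trivial bound for what remains is of size $k$, so the whole saving must come from the sum over $M$. The paper obtains it as follows:
\begin{enumerate}
\item Glue $M$ and $C$ into one matrix variable $N=M^{\top}C_0^{-1}$, using the invariance under $(M,C)\mapsto(UM,CU^{\top})$.
\item Rewrite $\mathcal{J}_\ell$ through \eqref{bes4} and \eqref{defE} as a $z$-integral of a single Bessel function against $e(\|NJ^{-1}\|^2z)E(z)$.
\item Apply Poisson summation in the entries of $N$ with the quadratic stationary phase bound \eqref{quadrstatph}.
\end{enumerate}
The crux is that the phase degenerates when $NJ^{-1}$ is close to orthogonal, i.e.\ $\|NJ^{-1}\|^2\approx 2|\det NJ^{-1}|$. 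Handling this requires the splitting $M_2(\mathbb{Q})=W_+\oplus W_-$ and Poisson summation only in the two long coordinates. Your plan does not anticipate any of this.

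Three smaller points:
\begin{itemize}
\item Kitaoka's diagonal only retains $\det M=1$. So $L(3/2,\chi_{-4})$ comes directly from the factor $L(s+1,\chi_{-4})$ in \eqref{DF}, not from a class-number count.
\item Convexity alone bounds the lifts by $O(k^{-1/2+\varepsilon})$, not $O(k^{-1+\varepsilon})$. This is still sufficient.
\item For lifts $D(s,F)$ has poles, so the weight in the approximate functional equation must cancel them, as the factor $1-4u^2$ in \eqref{approx} does.
\end{itemize}
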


The left hand side of \eqref{thm1eq} features   an $L$-function of degree $13$.  As a point of reference, the combined conductor of the left hand side is $k^8$, relative to a family of size $k^3$. This is comparable to \cite{Bl}, but with the more complicated $L$-function $D(\cdot, F)$. In the genus one case this  corresponds in terms of its Dirichlet series expansion to the notoriously difficult symmetric square $L$-functions. Indeed, while the spinor $L$-function  involves only the radial Fourier coefficients $a_F(mI)$, $m \in \mathbb{N}$, the standard $L$-function is built from the Fourier coefficients $a_F(MM^{\top})$ for $M \in {\rm SL}_2(\mathbb{Z}) \backslash M_2^+(\mathbb{Z})$.

Using the asymptotic formula (see e.g.\ \cite[Theorem~5]{Ig}) 
\[ D := \#\mathcal{B}_k^{(2)} = \frac{1}{8640}k^3 + O(k^2) \text{ and } V := \text{vol}({\rm Sp}_4( \mathbb{Z})\backslash \mathbb{H}_2) = \frac{\pi^3}{270},
\]
then based on \eqref{dpssth12} we can write the formula more intrinsically as
 \[ \frac{1}{V D}   \sum_{F \in \tilde{\mathcal{B}}_k^{(2)}} \frac{L(1/2, F)L(1/2, F \times \chi_{-4})D(1/2, F)}{L(1, \pi_F,\text{{\rm Ad}})}  = \frac{2}{3} \frac{L(3/2, \chi_{-4})}{\zeta(2)} \log k  + \frac{c_0}{3\zeta(2)} + O(k^{-\eta}).
\]
From Theorem \ref{thm1} we immediately conclude  the following simultaneous non-vanishing statement. 
\begin{cor}\label{Kor} If $k$ is sufficiently large, there exists $F \in \tilde{\mathcal{B}}_k^{(2)}$ such that
$$L_F(1/2), \quad  L(1/2, F \times \chi_{-4}), \quad D(1/2, F)$$
are simultaneously non-zero and moreover $D(1/2, F)>0$.
\end{cor}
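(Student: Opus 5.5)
The corollary follows from \eqref{thm1eq} in Theorem \ref{thm1} together with the nonnegativity of the central values. For $k$ large, the right-hand side of \eqref{thm1eq} equals $2L(3/2,\chi_{-4})\log k + c_0 + O(k^{-\eta})$. Since $L(3/2,\chi_{-4})>0$ (absolutely convergent Euler product with positive factors), this is strictly positive. The sum on the left therefore cannot vanish, so some $F \in \tilde{\mathcal{B}}_k^{(2)}$ has
\[
\frac{L(1/2,F)L(1/2,F\times\chi_{-4})D(1/2,F)}{L(1,\pi_F,\mathrm{Ad})}\neq 0 .
\]
For this $F$ the three central values are simultaneously nonzero.

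To get $D(1/2,F)>0$ and not merely $D(1/2,F)\ne 0$, I would upgrade this to a positivity argument. The plan is to show that every term in the sum has a sign factor that is nonnegative apart from $D(1/2,F)$.

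First, $L(1,\pi_F,\mathrm{Ad})>0$. This is a value at the edge of the critical strip of an $L$-function with positive Dirichlet coefficients in the log-Euler product, and it is nonzero.

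Second, $L(1/2,F)L(1/2,F\times\chi_{-4})\geq 0$. The cleanest route is \eqref{dpssth12}: it expresses this product as a positive constant times $\omega_F L(1,\pi_F,\mathrm{Ad})$. Here $\omega_F\geq 0$ because $a_F(I)$ is real for a Hecke eigenform normalized with real coefficients, so $a_F(I)^2\geq 0$, and the Gamma factors are positive for $k$ large.

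Hence each summand in \eqref{thm1eq} equals a nonnegative real number times $D(1/2,F)$. Note that $D(1/2,F)$ is real, since $D(s,F)$ has real coefficients (the Satake parameters come in inverse pairs) and a real functional equation. A positive sum then forces some term with a nonzero nonnegative weight to have $D(1/2,F)>0$. A nonzero weight means $\omega_F\neq 0$, which gives $L(1/2,F)L(1/2,F\times\chi_{-4})\neq 0$ via \eqref{dpssth12}.

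The only point needing care is the reality and nonnegativity of $\omega_F$, that is, working with a basis whose Fourier coefficients are real. This is standard for Hecke eigenforms on $\Sp(4,\Z)$, since the Hecke algebra is generated by self-adjoint operators with real eigenvalues and $F$ may be scaled so that its coefficients are real. The basis independence of $\sum\omega_F D(1/2,F)$ is not needed here, because we work directly with the eigenbasis of non-lifts in \eqref{thm1eq}.
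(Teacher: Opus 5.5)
Your argument is correct and matches the reasoning behind the paper's one-line deduction. The right-hand side of \eqref{thm1eq} is positive for large $k$, and by \eqref{dpssth12} each summand equals $\frac{\Gamma(2k-1)}{128\pi^5\Gamma(2k-4)}\,\omega_F\,D(1/2,F)$ with $\omega_F\geq 0$ and $D(1/2,F)$ real, so some $F$ has $\omega_F>0$ and $D(1/2,F)>0$.

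One correction: your stated reason for $L(1,\pi_F,\mathrm{Ad})>0$ is wrong, because the log-Euler coefficients $\operatorname{tr}\mathrm{Ad}(A_p^m)/m$ can be negative. This is harmless, since $L(1,\pi_F,\mathrm{Ad})$ cancels in that identity and its sign is never needed.
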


The non-negativity of $D(1/2, F)$ is a consequence of GRH and the intermediate value theorem (since $D(s, F) > 0$ for $\Re s > 1$), but unconditionally does not follow from a known period formula. 


\begin{cor}\label{Kor3}
     If $k$ is sufficiently large, we have the lower bound
\begin{align}
\sum_{F \in \tilde{\mathcal{B}}_k^{(2)}} \frac{|D(1/2, F)|^2}{ L(1, \pi_F,\text{{\rm Ad}})} \gg k^3.
\end{align}
\end{cor}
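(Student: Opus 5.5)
Corollary \ref{Kor3} follows from Theorem \ref{thm1} by Cauchy--Schwarz. The idea is to view the left-hand side of \eqref{thm1eq} as a weighted first moment of $D(1/2,F)$ with weights $L(1/2,F)L(1/2,F\times\chi_{-4})$, and to control the second moment of those weights. Write, for $F\in\tilde{\mathcal{B}}_k^{(2)}$,
\[
a_F := \frac{L(1/2,F)L(1/2,F\times\chi_{-4})}{L(1,\pi_F,\mathrm{Ad})^{1/2}}, \qquad b_F := \frac{D(1/2,F)}{L(1,\pi_F,\mathrm{Ad})^{1/2}} .
\]
By \eqref{thm1eq}, $\sum_F a_F b_F \gg k^3\log k$ for $k$ large, since $L(3/2,\chi_{-4})>0$. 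Cauchy--Schwarz then gives
\[
k^6(\log k)^2 \ll \Big(\sum_F a_F b_F\Big)^2 \le \Big(\sum_F a_F^2\Big)\Big(\sum_F \frac{|D(1/2,F)|^2}{L(1,\pi_F,\mathrm{Ad})}\Big).
\]
So the claim reduces to the upper bound $\sum_F a_F^2 \ll k^3(\log k)^2$, or indeed anything of size $O(k^3 (\log k)^2)$.

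To bound $\sum_F a_F^2$, I would use \eqref{dpssth12}, which gives $L(1/2,F)L(1/2,F\times\chi_{-4})/L(1,\pi_F,\mathrm{Ad}) \asymp k^3\omega_F$. Hence
\[
\sum_F a_F^2 \asymp k^3 \sum_F \omega_F\, L(1/2,F)L(1/2,F\times\chi_{-4}).
\]
The right-hand side is a harmonically weighted mixed first moment of spinor $L$-values. This is exactly the type of moment treated in \cite{Bl} with Kitaoka's formula, where such a moment is evaluated asymptotically as $\ll \log k$ or at worst a bounded power of $\log k$. Note that $L(1/2,F)L(1/2,F\times\chi_{-4})\ge 0$ by the period formula, so this is a genuine positive moment. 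Hence $\sum_F a_F^2 \ll k^3(\log k)^{A}$ for some small $A$.

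If $A\le 2$, we are done directly. This is the main obstacle: one needs a bound for $\sum_F \omega_F L(1/2,F)L(1/2,F\times\chi_{-4})$ that is $\ll (\log k)^2$. I would first try to get it by the same approximate functional equation and Kitaoka-formula analysis as in \cite{Bl}, where the diagonal term gives the main term and everything else is small. In that analysis the diagonal term is a double Dirichlet series in two radial coefficient sums whose polar behavior produces at most $\log k$.

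If only a weaker bound were available, I would fall back on Hölder: bound $\sum_F |b_F|^2$ below via $\sum a_Fb_F$ using a higher moment of $a_F$. That route, however, requires more input than the paper provides. So I expect the intended route to be the $\ll \log k$ bound for the mixed spinor moment, which gives $\sum_F|D(1/2,F)|^2/L(1,\pi_F,\mathrm{Ad})\gg k^6(\log k)^2/(k^3\log k)\gg k^3$. This even leaves a spare factor $\log k$.
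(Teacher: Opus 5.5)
Your argument is the paper's own: Cauchy--Schwarz applied to \eqref{thm1eq}, with the factor $\sum_F L(1/2,F)^2L(1/2,F\times\chi_{-4})^2/L(1,\pi_F,\mathrm{Ad})$ converted via \eqref{dpssth12} into $k^3\sum_F\omega_F L(1/2,F)L(1/2,F\times\chi_{-4})$ and bounded by the moment result of \cite[Theorem~2]{Bl} with $q_1=1$, $q_2=-4$. One correction: by my own heuristic count (the paper does not state this), that mixed moment should be of order $(\log k)^2$ rather than $\log k$, since the diagonal seems to pick up one logarithm from each of the two $L$-functions; if so, you land exactly on your threshold $A=2$ with no spare $\log k$, which would explain why the paper claims only $\gg k^3$.
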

This follows immediately from the Cauchy-Schwarz inequality, if we invoke the second moment result from \cite[Theorem~2]{Bl}
specialized to $q_1=1, q_2=-4$. Since the family is symplectic \cite[Section 10.2]{KWY}, the expected order of magnitude for the second moment is $k^3 (\log k)^3$. Combining the Cauchy-Schwarz inequality with a mollification argument (note that both Theorem \ref{thm1} and \cite{Bl} come with a power saving error term), it might be possible to upgrade Corollary \ref{Kor3} accordingly.

\subsection{Sketch of the proof}
We briefly  describe the strategy of the proof. We insert an approximate functional equation \eqref{approx} for $D(s, F)$ into Kitaoka's formula \eqref{Kita}. The diagonal term provides the main term and we need to find power saving estimates for the rank one and the rank two term involving symplectic Kloosterman sums and Bessel functions. 

Typically (e.g.\ in \cite{Bl}) the rank one term is not particularly difficult, but in our present situation the   trivial bound does not suffice, even assuming best possible cancellation in two-dimensional exponential sums. 
Instead, an additional  application of Poisson summation is necessary; see Section \ref{rank1}. In Lemma~\ref{exp}, we essentially prove a square-root cancellation in the above-mentioned exponential sums, which might be of independent interest and is also used to treat the rank one terms. 

In the rank two term, the first key step is to glue together the matrix summation variable from the approximate functional equation and the matrix modulus of the Kloosterman sum. Then we apply Poisson summation in a suitable set of variables -- in some ranges all entries of the matrices, in some ranges only two variables in a carefully chosen coordinate system. This is coupled with a detailed analysis of the matrix-Bessel function, which depends on the relative sizes of the singular values of the involved matrices. In particular, the matrix-Bessel function has less oscillation for orthogonal matrices, which affects the analysis in a crucial way. We therefore need to quantify how ``close'' the matrices in question are to being orthogonal. This argument is carried out in Section \ref{rank2} and is, not surprisingly, the most difficult part of the proof. 

\subsection{Further results and remarks}

We refer the reader to \cite{Zh, KWY} for some analysis of standard $L$-functions in the context of low-lying zeros. We also mention the recent work \cite{wwyy} for multiplicity one statements and moments of \emph{fixed} symplectic $L$-functions by Dirichlet characters.  

It would also be interesting to study the standard $L$-function by means of the pullback formula (see \cite{Bo}):
\begin{displaymath}
\begin{split}
F(Z) D(s, F) = (-1)^k & 2^{1-k}   \pi^{6-3s}  \frac{\Gamma(2s + k - 1) \Gamma(s-1)\Gamma(s - 3/2)}{\Gamma(s - 1/2)}\zeta(2s) \zeta(4s-2) \\
& \times \Big\langle  F, E_k\Big(\Big(\begin{matrix} -\bar{Z} & \\ & * \end{matrix}\Big) ; \frac{\bar{s}}{2} - \frac{k}{2}+ 1 \Big)\Big\rangle
\end{split}
\end{displaymath}
where $\langle .,. \rangle$ denotes the weight $k$ Petersson inner product and 
\[  E_k(Z; s) = \sum_{(\begin{smallmatrix} * & *\\ C & D\end{smallmatrix}) \in \Gamma_{\infty} \backslash {\rm Sp}_4(\mathbb{Z})} \det (CZ + D)^{-k} |\det (CZ+D)|^{-2s} (\det Y)^s \]
is the real-analytic Siegel Eisenstein series of even weight $k$. 


\subsection{Acknowledgements}
The   first author was supported by DFG through SFB-TRR 358 and EXC-2047/1 - 390685813 and by ERC Advanced Grant 101054336. This work started when S.D. was enjoying the hospitality at the University of Bonn in 2022 and was supported by an Alexander von Humboldt Fellowship. It is a pleasure for him to thank the University of Bonn for providing an excellent working atmosphere.
S.D. is supported by an ANRF grant ANRF/ARGM/2025/000535/MTR from the Ministry of Science and Technology, India, and thanks IISc. Bangalore, UGC Centre for Advanced Studies, DST, India, for financial support.

\section{Notation and Preliminaries}

\subsection{General notation}
We used mostly standard notation throughout the paper. For a matrix $M$, its transpose is denoted as $M^\top$. We denote by $M_2(R)$ the set of $2 \times 2$ matrices over a commutative ring $R$. The norm $\| M \|$ of a matrix is always the Frobenius norm $\| M \| = \text{tr}(MM^{\top})^{1/2}.$ If $M$ is invertible, we write $M^{\text{adj}} = M^{-1} \det M$.  

The notations $e(z):= \exp(2 \pi i z)$ and $\Re z$ for the real part of $z \in \mathbb{C}$ are used extensively. We write $\mathbb{N}_0$ to denote the set of all non-negative integers.

We denote the space of holomorphic Siegel cusp forms of genus (or degree) $2$ and level one by $S^{(2)}_k$. The orthogonal set of a Hecke eigenbasis of $S^{(2)}_k$ is denoted by $\mathcal{B}_k^{(2)}$, and the subset $\mathcal{\tilde B}_k^{(2)} \subset \mathcal{B}_k^{(2)}$ consists of the set of those eigenforms which are not Saito-Kurokawa lifts from degree $1$. For a thorough reference on the Saito-Kurokawa lifts, we refer the reader to \cite{FPRS}; see also \cite[Section~7.3]{DK}.

As is common, $\varepsilon$ will denote an arbitrarily small positive number, which may change from line to line. We will require a more refined notion of this convention, and this is explained at the beginning of Section~\ref{diag-section} and \eqref{layered} below. We call a quantity negligible if it is $\ll_A k^{-A}$ for any $A > 0$. 

\subsection{The standard \texorpdfstring{$L$}{L}-function}\label{sec21}

Let us briefly recall  some facts about the standard $L$-function. 

Let $F\in \mathcal{B}_k^2$ be an eigenform for all Hecke operators $T(n)$. 
Let $\{\lambda_F(n)\}$ (normalized so that the functional equations relate $s \to 1-s$) be the normalized eigenvalues of $F$. We refer the reader to \cite{An} for more details.


The degree $5$ standard $L$-function $D(s,F)  =L(s, \pi_F, \rho_5)$ attached to $F$ corresponds to the first non-trivial (degree $5$) representation $\rho_5 \colon \Sp(4,\C) \to \SO(5,\C)$,  where $\pi_F$ denotes the automorphic representation attached to the eigenform $F$ and $\Sp(4,\C)$ is the dual group of $\Sp(4, \R)$.  In particular, it arises as a general $L$-function in the sense of Langlands. 
It is given by
\begin{equation*}
D(s, F)=\prod_{p} D_{F,p} (p^{-s}),
\end{equation*}
where $p$-th Euler factor $L_{F,p}(s)$ of $( s, F)$ is given by
\begin{equation}\label{standardp}
D_{F,p}(t)^{-1}=(1-t)(1-\alpha_{1,p}t)(1-\alpha_{2,p}t)(1-\alpha_{1,p}^{-1}t)(1-\alpha_{2,p}^{-1}t).
\end{equation}
Here $\alpha_{0,p},\alpha_{1,p},\alpha_{2,p}$ denote the Satake  parameters at $p$ attached to $F$ satisfying
$\alpha_{0,p}^2\alpha_{1,p}\alpha_{2,p}=1.$

Let $F\in \mathcal{B}_k$. We may assume that $a_F(I) \not=0$, otherwise $\omega_F = 0$, and there is no contribution to $\mathcal{M}_k$, defined in Theorem \ref{thm1}.  Then for $\re(s)$ sufficiently large,  the standard $L$-function can also be written as the Dirichlet series (\cite{An}) 
\begin{equation}\label{DF}
D(s, F) = \frac{1}{a_F(I)} L(s+1, \chi_{-4})\zeta(2s) \sum_{M \in {\rm SL}_2(\mathbb{Z}) \backslash M_2^+(\mathbb{Z})} \frac{a_F(MM^{\top})}{(\det M)^{s+1/2}},
\end{equation}
and it has meromorphic continuation to the entire complex plane. This follows for instance by unfolding an integral of $F$ against a product of a theta series and a real analytic Siegel Eisenstein series, see e.g., \cite[Prop.~2.3]{AK}, but we do not use this. Here we normalize the Fourier coefficients to be roughly of size one on average, i.e.\ we write the Fourier expansion of $F$ as 
\[ F(Z)=\sum_{M \in \mathcal{S}} a_F(M) (\det M)^{\frac{k}{2} - \frac{3}{4}} e(\text{tr}(MZ)),
\]
where $\mathcal{S}$ denotes the set of positive-definite half-integral $2\times 2$ matrices over $\mathbb{Z}$.

The local factor of $D(s,F)$ at infinity is given by
\begin{align}\label{Linf}
D_{F,\infty}(s) &= \pi^{-5s/2}\Gamma\Big(\frac{s}{2}\Big) \Gamma\Big(\frac{s+k}{2}\Big)\Gamma\Big(\frac{s+k-1}{2}\Big)^2 \Gamma\Big(\frac{s+k-2}{2}\Big) \\
&= 2^{5 - 2k-2s} \pi^{1 - \frac{5s}{2}} \Gamma\Big(\frac{s}{2}\Big) \Gamma(k+ s - 2)\Gamma(k+ s - 1),
\end{align}
and we have $D(s, F) D_{F,\infty}(s)  = D(1-s, F) D_{F,\infty}(1-s) $. The function $D(., F)$ is holomorphic except if $F$ is a Saito-Kurokawa lift of an elliptic modular form $f$, in which case it has a pole at $s=1$ and equals
\begin{equation}\label{lift}
\zeta(s) L(s+1/2, f) L(s - 1/2, f), 
\end{equation}
as a comparison of Euler products shows.

For $F\in \mathcal{B}_k$ we have an approximate functional equation (\cite[Section 5.2]{IK})
\begin{equation}\label{approx}
a_F(I) D(1/2, F)  = 2\sum_{r, s} \frac{\chi_{-4}(s)}{rs^{3/2}} \sum_{M \in {\rm SL}_2(\mathbb{Z}) \backslash M_2^+(\mathbb{Z})} \frac{a_F(MM^{\top})}{\det M} W\Big( \frac{r^2 s \det M}{k^2}\Big)
\end{equation}
where
$$W(x) = \int_{(2)} (k^2 x)^{-u} \frac{D_{F,\infty}(u+1/2)}{D_{F,\infty}(1/2)}e^{u^2} \frac{1-4u^2}{u} \frac{du}{2\pi i}.$$
Note that $W(0) = 1$ and $W(x) \ll_A (1+x)^{-A} $ for any $A> 0$, uniformly in $k$. The factor $(1 - 4u^2)$ cancels possible poles in the case of Saito-Kurokawa lifts.

\subsection{The Kitaoka formula}

We follow \cite{Bl}, which is based on \cite{Ki}.  We generally write $\ell = k - 3/2$. 

For $Q, T \in \mathcal{S}$ and an invertible matrix $C \in M_2(\mathbb{Z})$ denote by 
\begin{equation}\label{klooster}
K(Q, T; C) = \sum e\left(\text{tr}(A C^{-1} Q + C^{-1} D T)\right)
\end{equation}
the ``Kloosterman sum'', where the sum is taken over matrices $\left(\begin{smallmatrix} A & \ast \\ C & D\end{smallmatrix}\right) \in {\rm Sp}_4( \mathbb{Z})$ for a given value of $C$ in a system $X(C)$ of representatives for $\Gamma_{\infty} \backslash {\rm Sp}_4( \mathbb{Z}) /\Gamma_{\infty}$ with $\Gamma_{\infty} = \left\{ \left(\begin{smallmatrix} I & X \\  & I\end{smallmatrix}\right)  \mid X = X^{\top}\right\}.$  

For a real, diagonalizable matrix $P$ with positive eigenvalues $s_1^2, s_2^2$ ($s_1, s_2 > 0$) we write 
\begin{equation}\label{defJ}
    \mathcal{J}_{\ell}(P) := \int_0^{\pi/2} J_{\ell}(4\pi s_1 \sin \theta)J_{\ell}(4\pi s_2 \sin \theta)\sin\theta\, d\theta. 
    \end{equation}

For two matrices $P = \left(\begin{smallmatrix} p_1& p_2/2\\ p_2/2 & p_4\end{smallmatrix}\right) \in \mathcal{S}$, $S = \left(\begin{smallmatrix} s_1& s_2/2\\ s_2/2 & s_4\end{smallmatrix}\right) \in \mathcal{S}$ and $c \in \mathbb{N}$ we define another Kloosterman-type sum
\begin{align} \label{Hkloos}
H^{\pm}(P, S; c) =  \delta_{s_4 = p_4} \underset{d_1 \, (\text{mod }c)}{\left.\sum\right.^{\ast}} \sum_{d_2 \, (\text{mod }c)} e \left( \frac{\overline{d_1} s_4 d_2^2 \mp \overline{d_1} p_2d_2 + s_2d_2 + \overline{d_1}p_1 + d_1 s_1 }{c} \mp \frac{p_2s_2}{2 cs_4}\right).
\end{align}
 
Let   
 \begin{equation}\label{ck}
      c_k =  \frac{1}{4}   \pi^{1/2}   (4\pi)^{3-2k} \Gamma(k-3/2)\Gamma(k-2).
      \end{equation}
 
 Then  for $T, Q\in \mathcal{S}$ and even $k \geq 6$ we have the absolutely convergent formula
  \begin{align}\label{Kita}
8 c_k & \sum_{F \in \mathcal{B}_k^{(2)}} \frac{a_F(T) a_F(Q)}{\| F \|^2}   = \delta_{Q \sim T} \#\text{{\rm Aut}}(T)\\
& +   \sum_{\pm} \sum_{s, c\geq 1} \sum_{U, V} \frac{(-1)^{k/2} \sqrt{2} \pi}{c^{3/2}s^{1/2}} H^{\pm} (UQU^{\top}, V^{-1} T V^{-\top} , c)J_{\ell}  \Big(\frac{4\pi \sqrt{\det(TQ)}}{c s}\Big)\\
& + 8\pi^2   \sum_{\det C \not= 0} \frac{K(Q, T; C)}{|\det C|^{3/2}}  \mathcal{J}_{\ell}(T C^{-1} Q C^{-\top}),
\end{align}
where the sum over $U, V \in {\rm GL}_2(\mathbb{Z})$ in the second term on the right hand side is over matrices 
$$U  = \left(\begin{matrix} * & *\\ u_3 & u_4 \end{matrix}\right)/\{\pm 1\},  \quad  
V  = \left(\begin{matrix} v_1 & *\\ v_3 &* \end{matrix}\right), \quad  (u_3\, u_4) Q \left(\begin{matrix} u_3\\ u_4\end{matrix}\right) = (-v_3\, v_1) T \left(\begin{matrix} -v_3\\ v_1\end{matrix}\right) = s,
$$ 
$Q \sim T$ means equivalence in the sense of quadratic forms and $\text{{\rm Aut}}(T) = \{U \in {\rm GL}_2(\mathbb{Z}) \mid U^{\top} T U = T\}$.   
 
 \subsection{Exponential sums}
 We now prove a lemma which bounds the Kloosterman-type sums $H^{\pm}(P, S; c)$ defined in \eqref{Hkloos} which appear in the rank $1$ part of the Kitaoka's formula \eqref{Kita} by essentially exhibiting a square-root cancellation.
 \begin{lemma}\label{exp} With the notation as above we have
 \[ |H^{\pm}(P, S; c)| \leq \tau(c) c (\det 2P, \det 2S, c) \]
for odd $c$,  where $\tau(c)$ denotes the number of divisors of $c$. 
 \end{lemma}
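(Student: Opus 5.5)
The approach is to evaluate the inner sum over $d_2$ explicitly as a quadratic Gauss sum and then bound what remains, which is a Salié/Kloosterman-type sum in $d_1$, trivially. Write $a = s_4 = p_4$; if $s_4 \neq p_4$ the sum vanishes. The additive term $\mp p_2 s_2/(2cs_4)$ does not depend on $d_1, d_2$, so it contributes only a unimodular factor and can be dropped. For fixed $d_1$ coprime to $c$ the inner sum is
\[
G(d_1) = \sum_{d_2 \, (\text{mod } c)} e\Big(\frac{\overline{d_1} a d_2^2 + (s_2 \mp \overline{d_1} p_2) d_2}{c}\Big).
\]
Since $c$ is odd, $2$ is invertible mod $c$ and we can complete the square.

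First I factor $c$ via the Chinese remainder theorem. The sum over $d_1 \in (\Z/c\Z)^*$, $d_2 \in \Z/c\Z$ factors as a product of analogous sums over prime powers $p^\nu \| c$, with the arguments twisted by units. So it suffices to prove $|H_{p^\nu}| \le (\nu+1) p^\nu (\det 2P, \det 2S, p^\nu)$ for odd $p$; multiplicativity of $\tau$ and of the gcd then gives the claim.

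At a prime power, let $g = (a, p^\nu)$. The Gauss sum $G(d_1)$ vanishes unless $g$ divides the linear coefficient, and then $|G(d_1)| = g^{1/2} (p^\nu)^{1/2}$. After completing the square, the phase picks up the term $-\overline{4 \overline{d_1} a}\,(s_2 \mp \overline{d_1} p_2)^2$, computed modulo $p^\nu/g$ after dividing through by $g$. Expanding this and combining it with $\overline{d_1} p_1 + d_1 s_1$ gives, up to a constant phase, $e\big((\overline{d_1}\, \overline{4a}(4ap_1 - p_2^2) + d_1\, \overline{4a}(4as_1 - s_2^2))/(p^\nu/g)\big)$. Up to sign, the coefficients here are $\det 2P = 4ap_1 - p_2^2$ and $\det 2S$, each multiplied by $\overline{4a}$. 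The Gauss sum also carries a Jacobi symbol $\big(\tfrac{\overline{d_1} a/g}{p^{\nu}/g}\big)$. The outer sum is therefore a Salié sum (or a Kloosterman sum, depending on the parity of the exponent) $S(\alpha\det 2P, \beta \det 2S; p^{\nu}/g)$, essentially repeated $g$ times to account for $d_1$ modulo $p^\nu$.

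Finally I would use the Weil bound for Kloosterman sums and the explicit evaluation of Salié sums:
\[
|S(m,n;q)| \le \tau(q)\, q^{1/2} (m,n,q)^{1/2}.
\]
Multiplying the pieces gives $g^{1/2} p^{\nu/2} \cdot g \cdot \tau\, (p^\nu/g)^{1/2} (\det 2P, \det 2S, p^\nu)^{1/2}$, which is roughly $g\, p^\nu \tau (\cdots)^{1/2}$.

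The main obstacle is removing the extra factor $g = (a, p^\nu)$. When $p \mid a$ the divisibility condition for $G(d_1) \ne 0$ forces $s_2 \equiv \pm \overline{d_1} p_2 \pmod g$. This restricts $d_1$ to a single class modulo $g$, unless $p \mid p_2, s_2$, in which case $p \mid \det 2P, \det 2S$ and the gcd absorbs the factor. Tracking these congruence conditions carefully is the delicate part, and it is where I expect the bound $c\,(\det 2P, \det 2S, c)$ rather than a sharper square-root gcd. As a fallback, I would evaluate the $p$-adic pieces with $p \mid a$ directly, by stationary phase for exponential sums (the $p$-adic method of Iwaniec--Kowalski, §12.3).
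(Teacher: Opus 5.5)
\textbf{There is a genuine gap, in the case $g>1$.} Your reduction to odd prime powers and the Gauss-sum evaluation of the $d_2$-sum match the paper. For $p\nmid s_4$ (so $g=1$) your argument is complete: the $d_1$-sum is then a genuine Kloosterman or Sali\'e sum modulo $p^\nu$ with coefficients $\overline{4a}\det 2P$, $\overline{4a}\det 2S$, and you even get the square root of the gcd.

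The case $g>1$ is the whole difficulty, and there your description of the $d_1$-sum is wrong. For $1<g<p^\nu$ the number $4a$ is not invertible modulo $p^\nu/g$; only $a/g$ is a $p$-unit. More importantly, the completed-square phase has denominator $p^\nu/g$, while $\overline{d_1}p_1+d_1s_1$ has denominator $p^\nu$. Over a common denominator one gets
\[
e\Big(\frac{\overline{4a/g}\,\big(d_1\det 2S+\overline{d_1}\det 2P\pm 2p_2s_2\big)}{g\,p^{\nu}}\Big),
\]
where only the whole bracket is divisible by $g$ (thanks to $g\mid d_1s_2\mp p_2$), not $\det 2S/g$ and $\det 2P/g$ individually. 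So the modulus goes \emph{up} to $gp^\nu$, not down to $p^\nu/g$. After lifting $d_1$ to residues modulo $gp^\nu$, you face a sum restricted to one class modulo $g/(g,s_2)$. That is an incomplete sum, and the Weil and Sali\'e bounds do not apply to it, so your intermediate bound $g\,p^\nu\tau(\cdots)^{1/2}$ is unfounded.

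The sketched repair does not work either. The congruence modulo $g$ sits inside the same $p$-power modulus and cannot be split off as an independent factor. When $p\mid p_2,s_2$, the gcd need not absorb $g$: for instance $v_p(p_2)=v_p(s_2)=1$ and $v_p(s_4)\ge 3$ give $p^2\,\|\,\det 2P$ however large $g$ is, while a restriction modulo $g/p$ remains. Genuine cancellation is needed: for $g=p$ and $p\nmid p_2s_2$ the gcd is $1$, but the trivial bound is $p^{(3\nu-1)/2}$, which exceeds $p^\nu$ once $\nu\ge 2$.

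\textbf{The missing ingredient} is exactly the $p$-adic stationary phase you list only as a fallback; it is how the paper proves the lemma. Put $gp^\nu=p^m$ and $\lambda=\lfloor m/2\rfloor$. Then $|H_{p^\nu}|=\sqrt{p^\nu/g}\,|\Sigma|$, where $\Sigma$ is the above sum over admissible units $d_1$ modulo $p^m$ (including the Jacobi symbol). Write $d_1=x+p^\lambda y$ with $x$ modulo $p^\lambda$ and $y$ modulo $p^{m-\lambda}$.
\begin{itemize}
\item Since $g\le p^\nu$ forces $g\mid p^\lambda$, the congruence restriction involves only $x$, and the $y$-sum is complete.
\item For $m$ even the $y$-sum is linear and detects $x^2\det 2S\equiv\det 2P\pmod{p^\lambda}$.
\item For $m$ odd it reduces to a quadratic sum modulo $p$, of size $p^{1/2}$ except on the corresponding congruence modulo $p^{\lambda+1}$.
\end{itemize}
There are at most $2(\det 2S,\det 2P,p^\lambda)$ units $x$ solving $x^2\det 2S\equiv\det 2P\pmod{p^\lambda}$, so this gives $|H_{p^\nu}|\le 2p^\nu(\det 2P,\det 2S,p^\nu)$. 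When $g=p^\nu$ there is no quadratic phase at all, and counting the admissible $d_1$ suffices. No Weil-type input is used, which is why the lemma carries the full gcd rather than its square root. As written, your proposal establishes the bound only for the prime powers $p^\nu\,\|\,c$ with $p\nmid s_4$.
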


\begin{proof} By twisted multiplicativity of exponential sums it is enough to treat the case where $c = p^k$ is an odd prime power. 
Write $\nu = \min(k, v_p(s_4))$. Evaluating a quadratic Gau{\ss} sum, we have 
\begin{align}
&\sum_{d_2 \, (\text{mod }p^k)} e \left( \frac{\overline{d_1} s_4 d_2^2 \mp \overline{d_1} p_2d_2 + s_2d_2  }{p^k} \right) \\
&=  \textbf{1}_{p^{\nu} \mid d_1s_2   \mp p_2 } \sqrt{ p^{k+\nu} \chi_{-4}(p^{k-\nu})}e \left( \frac{ -\overline{4\frac{s_4}{p^{\nu}}  d_1} (\frac{\mp p_2 + d_1 s_2}{p^{\nu}})^2   }{p^{k-\nu}}  \right) \psi \Big(\frac{s_4}{p^{\nu}}d_1\Big)  
\end{align}
where $\psi = \chi_p$ is the unique non-trivial quadratic character modulo $p$ of $k-\nu$ is odd  and $\psi$ is the trivial character if $k-\nu$ is even. 
We compute
$$p^{\nu} \Big( -\overline{4\frac{s_4}{p^{\nu}}  d_1} \Big(\frac{\mp p_2 + d_1 s_2}{p^{\nu}}\Big)^2\Big) +  \overline{d_1}p_1 + d_1 s_1 =  \overline{\frac{4s_4}{p^{\nu}}}\Big( d_1 \frac{\det 2S}{p^{\nu}} + \bar{d}_1 \frac{\det 2P}{p^{\nu}} \mp  \frac{2p_2 s_2}{p^{\nu}}\Big)$$
in $\mathbb{Z}/p^k\mathbb{Z}$ for $p^{\nu} \mid d_1s_2   \mp p_2$, using that $s_4 = p_4$. 

Thus we obtain
\begin{align}
|H^{\pm}(P, S;p^k)|& =  \sqrt{ p^{k+\nu}} \Bigg| \underset{\substack{d_1 \, (\text{mod }p^k) \\p^{\nu} \mid d_1s_2   \pm p_2 } }{\left.\sum\right.^{\ast}} \psi(d_1) e \left( \frac{ d_1\overline{\frac{4s_4}{p^{\nu}}}  \frac{ \det 2S}{p^{\nu}} + \bar{d}_1 \overline{\frac{4s_4}{p^{\nu}}}  \frac{ \det 2P}{p^{\nu}}}{p^k} \right)\Bigg|.  
\end{align}
 Note that $p^{-\nu}   \det 2S$ and $p^{-\nu} \det 2P$ are not necessarily integers, but by construction  the above expression is well-defined for $d_1$ (mod $p^k$). 

 Let $\mu = \min(\nu, v_p(s_2)) = \min(\nu, v_p(p_2))$ and write $s_2' = s_2/p^{\mu}$, $p_2' = p_2/p^{\mu}$.  Let us also write $s_4' = s_4/p^{\nu}$, so that $(s_2'p_2's_4', p) = 1$. 
To avoid ambiguities we extend the $d_1$-sum to a sum modulo $p^{k+\nu}$ and write
\begin{equation}\label{Hsum}
|H^{\pm}(P, S;p^k)| = \sqrt{p^ {k-\nu}} \Big| \underset{\substack{d_1 \, (\text{mod } p^{k+\nu}) \\  d_1  \equiv \pm p_2' \overline{s_2'} \, (\text{mod }p^{\nu - \mu}) }}{\left.\sum\right.^{\ast}}\psi(d_1) e \Big( \frac{ d_1 \overline{4s_4'}    \det 2S  + \bar{d}_1 \overline{4s_4'}  \det 2P }{ p^{k+\nu}} \Big)\Big|.
\end{equation}

If $k = 1$, $\nu = 0$, the $d_1$-sum is a Sali\'e sum and the entire expression is bounded by $2p(p, \det 2S, \det 2P)^{1/2}$.

  If $k+\nu =2\lambda \geq 2$, say, is even (in which case also $k - \nu$ is even  and $\nu - \mu \leq \lambda$), we write $d_1 = a + p^{\lambda} b$ with $a, b$ (mod $p^{\lambda}$) and $a  \equiv \pm p_2' \overline{s_2'} \, (\text{mod }p^{\nu - \mu})$ to see that 
  \begin{align}
|H^{\pm}(P, S;p^k)| &= \sqrt{p^ {k-\nu}} \Big| \underset{\substack{a \, (\text{mod } p^{\lambda}) \\  a \equiv \pm p_2' \overline{s_2'} \, (\text{mod }p^{\nu - \mu}) }}{\left.\sum\right.^{\ast}}  e \Big( \frac{ a  \overline{4s_4'}  \det 2S  + \bar{a}  \overline{4s_4'}  \det 2P }{ p^{2\lambda}} \Big)\\
& \quad\quad\quad\quad\quad  \times \sum_{b\, (\text{mod } p^{\lambda})}e \Big( \frac{ b  \overline{4s_4'} \det 2S  - b\bar{a}^2  \overline{ 4s_4'}  \det 2P }{ p^{ \lambda}} \Big)\Big|\\
 & \leq  \sqrt{p^ {k-\nu}}p^{\lambda} \#\{ a \in (\mathbb{Z}/p^{\lambda}\mathbb{Z})^{\ast} \mid a^2  \overline{4s_4'}  \det 2S  -   \overline{4s_4'}  \det 2P \equiv 0\, (\text{mod } p^{\lambda}) \} \\
 & \leq 2 \sqrt{q^ {k-\nu}} p^{\lambda} (\det 2S, \det 2P, p^{\lambda}) \leq 2p^k (\det 2S, \det 2P, p^k). 
\end{align}

If $k+\nu =2\lambda+1 \geq 3$ is odd (in which case still $\nu -\mu \leq \lambda$), we write again $d_1 = a + p^{\lambda} b$ to see that 
$|H^{\pm}(P, S;p^k)|$ equals 
\begin{align}
& \sqrt{p^ {k-\nu}}  \Big| \underset{\substack{a \, (\text{mod } p^{\lambda}) \\  a \equiv \pm p_2' \overline{s_2'} \, (\text{mod }p^{\nu - \mu}) }}{\left.\sum\right.^{\ast}}\chi_p(a)  e \Big( \frac{ a  \overline{4s_4'} \det 2S  + \bar{a}  \overline{4s_4'}  \det 2P }{ p^{2\lambda+1}} \Big)\\
 &\quad\quad\quad\quad\quad\times\sum_{b\, (\text{mod } p^{\lambda+1})}e \Big( \frac{ b \overline{4s_4'} \det 2S  +(- b\bar{a}^2+ p^{\lambda}b^2 \bar{a}^3)  \overline{4s_4'}  \det 2P  }{ p^{ \lambda+1}} \Big)\Big|\\
 \end{align}
\begin{align}
 & =  \sqrt{p^ {k-\nu}}  p^{\lambda} \Big| \underset{\substack{a \, (\text{mod } p^{\lambda}) \\  a \equiv \pm p_2' \overline{s_2'} \, (\text{mod }p^{\nu - \mu}) \\     a^2  \overline{4s_4'}\det 2S  -   \overline{4s_4'}  \det 2P \equiv 0\, (\text{mod } p^{\lambda})  }}{\left.\sum\right.^{\ast}}\chi_p(a)  e \Big( \frac{ a  \overline{4s_4'} \det 2S  + \bar{a}  \overline{4s_4'}  \det 2P }{ p^{2\lambda+1}} \Big)\\
& \quad\quad\quad\quad\quad\quad \times \sum_{c\, (\text{mod } p)}e \Big( \frac{ b \frac{ \overline{4s_4'}\det 2S  - \bar{a}^2 \overline{4s_4'}  \det 2P}{p^{\lambda}} +  b^2 \bar{a}^3   \overline{4s_4'}  \det 2P  }{ p } \Big)\Big|.
\end{align}
The $c$-sum is of absolute value $p^{1/2}$ unless $  a^2  \overline{4s_4'}\det 2S  -   \overline{4s_4'}  \det 2P $ is not only divisible by $p^{\lambda}$, but by $p^{\lambda+1}$, in which case we bound it trivially by $p$.  We conclude
$$|H^{\pm}(P, S;p^k)| \leq 2\sqrt{p^ {k-\nu}}  p^{\lambda} p^{1/2}  (\det 2S, \det 2P, p^{\lambda+1}) \leq 2 p^k (\det 2S, \det 2P, p^k)$$
since $\lambda+1 \leq k$. 
%
\end{proof}

A similar bound holds for all $c$, but the computation for even prime powers is a little more tedious, and we do not need it. We conclude the following corollary.
\begin{cor}\label{cor1} If $(\det 2P, c) \ll 1$ and $c = q 2^{\kappa}$ with $q$ odd, $\kappa \in \mathbb{N}_0$, then
$$|H^{\pm}(P, S; c)| \ll c^{1+\varepsilon}2^{\kappa}.  $$
 \end{cor}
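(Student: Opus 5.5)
The plan is to split $c$ into its odd and even parts and combine Lemma~\ref{exp} with a trivial bound at $2$. Write $c = q2^{\kappa}$ with $q$ odd and $(q,2^{\kappa})=1$. The sum $H^{\pm}(P,S;c)$ factors up to twisted multiplicativity: choose $d_1 \equiv d_1' \overline{2^{\kappa}}2^{\kappa} + d_1''\bar q q$ and similarly for $d_2$ via the Chinese remainder theorem. Then the exponential $e(\cdot/c)$ splits as a product of an exponential modulo $q$ and one modulo $2^{\kappa}$, with the coefficients twisted by $\overline{2^{\kappa}}$ resp.\ $\bar q$.

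The term $\mp p_2s_2/(2cs_4)$ is independent of $d_1,d_2$, so it only contributes a unimodular constant and does not affect absolute values. After this it is not needed in either factor. The odd factor is again a sum of exactly the shape \eqref{Hkloos} with modulus $q$ and with $P,S$ replaced by suitably twisted matrices. Concretely, multiplying all entries by the unit $\overline{2^{\kappa}}$ modulo $q$ corresponds to replacing $(P,S)$ by $(uP,uS)$ in the quadratic expressions. Such a scaling changes $\det 2P,\det 2S$ only by unit squares modulo $q$, so the gcd with $q$ is unchanged. This is the step to check carefully, namely that the twisted sum is literally of the form treated in Lemma~\ref{exp}. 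If the bookkeeping is awkward, I would instead rerun the proof of Lemma~\ref{exp} prime by prime: that proof itself begins with the reduction to odd prime powers via twisted multiplicativity, so the statement really applies to the odd part of any $c$.

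Lemma~\ref{exp} then gives for the odd part the bound $\tau(q)\,q\,(\det 2P,\det 2S,q) \le \tau(q)\,q\,(\det 2P,q)$. By hypothesis $(\det 2P,c)\ll 1$, so $(\det 2P,q)\ll1$, and $\tau(q)\ll q^{\varepsilon}$. Hence the odd part is $\ll q^{1+\varepsilon}$.

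For the $2$-part I would bound trivially. The sum over $d_1 \pmod{2^\kappa}$ has at most $2^{\kappa-1}$ terms and the sum over $d_2$ has $2^{\kappa}$ terms, so the $2$-part is at most $2^{2\kappa}$. Multiplying gives
\[
|H^{\pm}(P,S;c)| \ll q^{1+\varepsilon}2^{2\kappa} \le c^{1+\varepsilon}2^{\kappa}.
\]
The only potential obstacle is the precise form of the twisted multiplicativity in the first step. The loss at $2$ is harmless because it is absorbed into the factor $2^{\kappa}$ allowed in the statement.
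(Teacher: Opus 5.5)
Your proposal is correct and is essentially the paper's route. The paper states the corollary as an immediate consequence of Lemma~\ref{exp}, applied to the odd part $q$ of $c$ (its proof already reduces to odd prime powers by twisted multiplicativity), together with the trivial bound $2^{2\kappa}$ for the $2$-part, which is where the extra factor $2^{\kappa}$ comes from. Your check that the twisted odd factor has the shape of Lemma~\ref{exp} with $(uP,uS)$ is right: the condition $s_4=p_4$ is preserved and $(\det 2P,q)$ does not change.
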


\subsection{Oscillatory integrals}
At several places in our analysis, we would require good bounds on various types of exponential integrals. We devote this subsection towards this objective.

The following is \cite[Lemma 8.1]{BKY}.
\begin{lemma} \label{integrationbyparts}
 Let $Y \geq 1$, $X, Q, U, R > 0$, 
and suppose that $w$ 
 is a smooth function with support on $[\alpha, \beta]$, satisfying
\begin{equation*}
w^{(j)}(t) \ll_j X U^{-j}.
\end{equation*}
Suppose $h$ 
  is a smooth function on $[\alpha, \beta]$ such that
\begin{equation}
 |h'(t)| \geq R
\end{equation}
for some $R > 0$, and
\begin{equation}\label{diffh0}
h^{(j)}(t) \ll_j Y Q^{-j}, \qquad \text{for } j=2, 3, \dots.
\end{equation}
Then  
\begin{equation*}
 \int_{\mathbb{R}} w(t) e^{i h(t)} dt  \ll_A (\beta - \alpha) X [(QR/\sqrt{Y})^{-A} + (RU)^{-A}].
\end{equation*}
for every $A > 0$. 
\end{lemma}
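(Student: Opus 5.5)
The plan is the standard repeated integration by parts against the phase. Since $|h'| \geq R > 0$ on $[\alpha,\beta]$, we can write
$$e^{ih(t)} = \frac{1}{ih'(t)} \frac{d}{dt} e^{ih(t)}.$$
Because $w$ is smooth with support in $[\alpha,\beta]$, integration by parts produces no boundary terms. Define the operator $\mathcal{D} f := -\big( f/(ih') \big)'$. After $n$ integrations by parts we get
$$\int_{\mathbb{R}} w(t) e^{ih(t)}\,dt = \int_{\mathbb{R}} (\mathcal{D}^n w)(t)\, e^{ih(t)}\,dt,$$
so it suffices to show the pointwise bound
$$\mathcal{D}^n w \ll_n X\big[(QR/\sqrt{Y})^{-1} + (RU)^{-1}\big]^n$$
on $[\alpha,\beta]$. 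Integrating this trivially over an interval of length $\beta-\alpha$ then gives the claim with $A = n$. We may also assume both $QR/\sqrt{Y}$ and $RU$ are at least $1$, since otherwise the trivial bound $(\beta-\alpha)X$ already suffices.

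For the pointwise bound, I would prove by induction on $n$ a structural description of $\mathcal{D}^n w$. It is a linear combination, with coefficients depending only on $n$, of terms of the form
$$\frac{w^{(\nu)}\, h^{(j_1)} \cdots h^{(j_m)}}{(h')^{n+m}}, \qquad j_i \geq 2,\qquad \nu + \sum_{i} (j_i - 1) = n.$$
The case $n = 0$ is trivial. The induction step follows from the Leibniz rule. Differentiating $w^{(\nu)}$ raises $\nu$ by one. Differentiating an existing $h^{(j_i)}$ raises $j_i$ by one. Differentiating a power of $h'$ in the denominator creates a new factor $h''$ and raises the power of $h'$ by one. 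In each case the weighted count $\nu+\sum(j_i-1)$ increases by exactly one. The extra division by $h'$ built into $\mathcal{D}$ accounts for the exponent $n+m$.

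Next insert the hypotheses $w^{(\nu)} \ll X U^{-\nu}$, $h^{(j)} \ll Y Q^{-j}$ and $|h'| \geq R$. A typical term is then bounded by
$$X\, U^{-\nu}\, Y^m\, Q^{-\sum j_i}\, R^{-n-m} = X\,(RU)^{-\nu}\, \prod_{i=1}^m \frac{Y}{R^{\,j_i} Q^{\,j_i}}.$$
It remains to check that each factor $Y/(RQ)^{j_i}$ is at most $(\sqrt{Y}/(RQ))^{j_i-1}$. This holds because $j_i \geq 2$ and $Y \geq 1$: indeed $Y/(RQ)^{j} \leq (\sqrt{Y}/(RQ))^{j}\, Y^{1-j/2}$, and $Y^{1-j/2} \leq 1$. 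Hence every term is
$$\ll X (RU)^{-\nu} (QR/\sqrt{Y})^{-\sum(j_i-1)} \leq X \max\big((RU)^{-1}, (QR/\sqrt{Y})^{-1}\big)^n,$$
which gives the claimed pointwise bound.

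The only delicate step is this bookkeeping of the induction. One must verify that each new factor of $h''$ introduced by differentiating $1/h'$ comes paired with exactly one extra factor $1/h'$, so that the exponents balance. One must also check that the normalization $Y \geq 1$ is precisely what allows the single-derivative losses to be absorbed into powers of $\sqrt{Y}/(QR)$. Everything else is routine.
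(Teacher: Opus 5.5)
Your argument is correct and is essentially the standard proof of \cite[Lemma 8.1]{BKY}, which the paper quotes without proof. That proof is repeated integration by parts, plus an induction showing the $n$-fold iterate is a combination of terms $w^{(\nu)}h^{(j_1)}\cdots h^{(j_m)}/(h')^{n+m}$ with $\nu+\sum_i (j_i-1)=n$, followed by inserting the hypotheses. One small point: lowering the exponent from $(\sqrt{Y}/(RQ))^{j_i}$ to $(\sqrt{Y}/(RQ))^{j_i-1}$ also needs your preliminary reduction $QR\geq \sqrt{Y}$, not only $Y\geq 1$ and $j_i\geq 2$; since you made that reduction at the start, nothing is actually missing.
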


The following is a classical version of Parseval's identity in many variables, see e.g. \cite[Theorem 1.15]{SW}. For $x=(x_1, \ldots, x_n)$ and $X = (X_1, \ldots, X_n)$ with $X_j \neq 0$ and a function $f \colon \mathbb{R}^n \to \mathbb{C}$, we put $f  (\frac{x}{X}  ):=  f  ( \frac{x_1}{X_1},\ldots, \frac{x_n}{X_n}  )  $.
\begin{lemma} \label{parseval}
Let $A\in {\rm GL}_n(\mathbb{R})$, $b \in \mathbb{R}^n$, $X = (X_1, \ldots, X_n)$ with $X_j \geq 1$. Let $f : \mathbb{R}^n \rightarrow \mathbb{C} $ be a smooth, compactly supported function. We have the following identity.
\begin{align} \label{parseval-eq}
&\int_{\mathbb{R}^n} e(x^{\top} A x + x^{\top} b) f\Big(\frac{x}{X}\Big) dx \\
&= \frac{e(\frac{n}{8} \text{{\rm sgn}} \det A )}{2^{n/2} |\det A|^{1/2}} X_1\cdots X_n \int_{\mathbb{R}^n} e\Big( \frac{1}{4} (b-t)^{\top} A^{-1} (b-t)\Big) \hat{f}(t_1X_1, \ldots t_nX_n) dt.
\end{align}
\end{lemma}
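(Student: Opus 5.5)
The identity reduces to the one-dimensional Fresnel integral after completing the square and diagonalizing $A$. We take $A$ symmetric; if it is not, replace it by $\frac12(A+A^\top)$, which leaves $x^\top A x$ unchanged. Write $g(x) = f(x/X)$. Its Fourier transform, with the convention $\hat f(\xi)=\int f(x)e(-x^\top\xi)\,dx$, is
\[
\hat g(t) = X_1\cdots X_n\,\hat f(t_1X_1,\ldots,t_nX_n).
\]
This accounts for the factor $X_1\cdots X_n$ and the rescaled $\hat f$ on the right-hand side.

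The main step is Parseval/Plancherel in the form
\[
\int_{\mathbb{R}^n} \phi(x)\,g(x)\,dx = \int_{\mathbb{R}^n} \hat\phi(-t)\,\hat g(t)\,dt, \qquad \phi(x) = e(x^\top A x + x^\top b).
\]
This is valid because $g$ is Schwartz and $\phi$ is a tempered distribution. To justify it cleanly, I would insert a Gaussian damping factor $e^{-\epsilon\pi|x|^2}$ into $\phi$, which makes everything absolutely convergent, and let $\epsilon\to0$ at the end. The limit is dominated convergence: $g$ is compactly supported on the left, and $\hat g$ is Schwartz on the right while the transforms of the damped chirps are bounded uniformly in $\epsilon$.

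It remains to compute $\hat\phi$. Completing the square gives
\[
x^\top A x + x^\top(b-\xi) = (x+\tfrac12A^{-1}(b-\xi))^\top A\,(x+\tfrac12A^{-1}(b-\xi)) - \tfrac14 (b-\xi)^\top A^{-1}(b-\xi).
\]
Translating $x$ and then diagonalizing $A = O^\top \mathrm{diag}(\lambda_j) O$ with $O$ orthogonal, which has Jacobian $1$, reduces the integral to a product of one-dimensional integrals $\int e(\lambda y^2)\,dy$. Each of these equals $(2|\lambda|)^{-1/2}e(\operatorname{sgn}(\lambda)/8)$ by the Fresnel integral, computed as the $\epsilon\to0$ limit of the Gaussian integral with complex parameter.

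Taking the product over $j$ gives $2^{-n/2}|\det A|^{-1/2}$ times $e\big(\tfrac18\sum_j\operatorname{sgn}\lambda_j\big)$, the phase being the signature of $A$ over $8$. Hence
\[
\hat\phi(-t) = \frac{e\big(\tfrac18\sum_j\operatorname{sgn}\lambda_j\big)}{2^{n/2}|\det A|^{1/2}}\, e\Big(-\tfrac14 (b+t)^\top A^{-1}(b+t)\Big).
\]
Inserting this into the Parseval identity gives the stated formula, up to two points of bookkeeping.

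First, the sign of $t$ and the sign in front of the quadratic form depend on the Fourier convention. Second, the phase in \eqref{parseval-eq} is written as $\frac n8\operatorname{sgn}\det A$, whereas the computation produces the signature of $A$ over $8$. These agree when $A$ is definite. For indefinite $A$ one must read the phase as the signature, or note that the lemma is only applied with definite $A$.

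I expect no serious obstacle; the only delicate point is justifying the interchange with the non-integrable chirp, which the Gaussian regularization handles.
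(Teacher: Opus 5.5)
Your computation is correct and follows the route the paper intends. The paper gives no proof; it calls the identity a classical form of Parseval's identity and cites Stein--Weiss. Parseval combined with the Fourier transform of a regularized complex Gaussian is the standard way to fill this in. One ordering point: in the regularized integral, rotate to diagonalize $A$ first, since the rotation preserves $e^{-\epsilon\pi|x|^2}$. Then complete the square in each one-dimensional complex Gaussian, instead of translating by a real vector first. This gives the uniform bound $|\hat\phi_\epsilon|\le\prod_j(2|\lambda_j|)^{-1/2}$ and the pointwise limit that your dominated convergence needs.

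One claim in your bookkeeping is wrong: the sign in front of the quadratic form is not a matter of Fourier convention. Switching to $\hat f(\xi)=\int f(x)e(x\cdot\xi)\,dx$ only replaces $t$ by $-t$. That turns your $e(-\frac14(b+t)^\top A^{-1}(b+t))$ into $e(-\frac14(b-t)^\top A^{-1}(b-t))$. The $-\frac14$ itself comes from completing the square: it is the stationary value $-\frac14 b^\top A^{-1}b$ of $x^\top Ax+x^\top b$.

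So your computation actually shows that the printed identity has the wrong sign. Take $n=1$, $b=0$, $X=1$, $A=a>0$ and $f(x)=e^{-\pi x^2}$; this is admissible because the identity extends to Schwartz $f$ by density. The left side is $(1-2ia)^{-1/2}=\frac{e(1/8)}{\sqrt{2a}}(1+\frac{i}{2a})^{-1/2}$. The printed right side is $\frac{e(1/8)}{\sqrt{2a}}(1-\frac{i}{2a})^{-1/2}$, which is different.

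You should therefore state and prove the corrected lemma rather than the printed one. It has the factor $e(-\frac14(b-t)^\top A^{-1}(b-t))$, the phase $e(\frac18\sum_j\operatorname{sgn}\lambda_j)$, and $A$ assumed symmetric. On symmetry: replacing $A$ by $\frac12(A+A^\top)$ is not a reduction, because it changes $\det A$ and $A^{-1}$ on the right-hand side. For non-symmetric $A$ the formula is false in general.

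None of this affects the paper. Lemma \ref{parseval-bd} only uses absolute values. In both applications the matrix is symmetric positive definite: $zJ^{-1}J^{-\top}$, and the $4\times 4$ matrix with eigenvalues $z(1\pm\sin\theta)$, where $0\le\sin\theta<1$.
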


\begin{lemma} \label{parseval-bd}
With the notation as in Lemma~\ref{parseval}, we have
\begin{equation}\label{quadrstatph}
 \int_{\mathbb{R}^n} e(x^{\top} A x + x^{\top} b) f\Big(\frac{x}{X}\Big) dx  \ll  \frac{\| f \|_{\nu, 2}}{|\det A|^{1/2}}
\end{equation}
for $\nu> n/2$, where the implied constant depends only on  $n$ and $\| . \|_{\nu, 2}$ is the usual $W^{\nu, 2}$-Sobolev norm: $\| D^{\alpha} f \|_2 < \infty$  for $|\alpha | \leq \nu$. 

\end{lemma}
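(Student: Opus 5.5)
We prove Lemma~\ref{parseval-bd} directly from the identity in Lemma~\ref{parseval}. The plan is to bound the right-hand side of \eqref{parseval-eq} trivially in absolute value, discarding the unimodular phase $e(\frac14 (b-t)^{\top}A^{-1}(b-t))$. This gives
\[
\Big|\int_{\mathbb{R}^n} e(x^{\top}Ax+x^{\top}b) f\Big(\frac{x}{X}\Big)dx\Big| \leq \frac{X_1\cdots X_n}{2^{n/2}|\det A|^{1/2}} \int_{\mathbb{R}^n} |\hat f(t_1X_1,\ldots,t_nX_n)|\,dt .
\]
The substitution $u_j = t_jX_j$ has Jacobian exactly $(X_1\cdots X_n)^{-1}$, which cancels the prefactor. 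The bound is therefore $\ll |\det A|^{-1/2}\|\hat f\|_{L^1(\mathbb{R}^n)}$, and in particular it is uniform in $X$ and $b$.

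It remains to show that $\|\hat f\|_1 \ll_n \|f\|_{\nu,2}$ for $\nu>n/2$; this is a Sobolev embedding argument. Write
\[
|\hat f(u)| = (1+|u|^2)^{-\nu/2}\cdot (1+|u|^2)^{\nu/2}|\hat f(u)|
\]
and apply Cauchy--Schwarz. This gives $\|\hat f\|_1 \leq \big(\int (1+|u|^2)^{-\nu}du\big)^{1/2}\big(\int (1+|u|^2)^{\nu}|\hat f(u)|^2du\big)^{1/2}$. The first factor is finite exactly because $2\nu>n$, and it depends only on $n$ and $\nu$. For the second factor, expand $(1+|u|^2)^{\nu}\ll_{n,\nu}\sum_{|\alpha|\leq\nu}|u^{\alpha}|^2$, assuming $\nu$ is an integer, which we may since only integer Sobolev orders are used. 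Then note that $u^{\alpha}\hat f(u)$ is, up to a factor $(2\pi i)^{|\alpha|}$, the Fourier transform of $D^{\alpha}f$. By Plancherel, the second factor is therefore $\ll \big(\sum_{|\alpha|\leq \nu}\|D^{\alpha}f\|_2^2\big)^{1/2}\asymp \|f\|_{\nu,2}$.

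Since $f$ is smooth and compactly supported, all of these manipulations are justified, and combining the two steps gives \eqref{quadrstatph}. There is no real obstacle here. The only points needing care are that the rescaling $x/X$ in Lemma~\ref{parseval} produces exactly the factor $X_1\cdots X_n$ that cancels against the change of variables, and that the bound does not depend on $b$. Alternatively, one could avoid Lemma~\ref{parseval} and substitute $x=Xy$ directly. However, that route would leave $|\det A|^{1/2}$ multiplied by $X_1\cdots X_n$ and would require a further scaling argument, so the Parseval route is cleaner.
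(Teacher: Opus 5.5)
Your proof is correct and follows the paper's argument. You bound the right-hand side of Lemma~\ref{parseval} in absolute value, cancel the factor $X_1\cdots X_n$ by rescaling, and control $\|\hat f\|_{1}$ by Cauchy--Schwarz with the weights $(1+|u|^2)^{\pm\nu/2}$ followed by Plancherel; you just fill in the steps the paper leaves implicit, and your exponents $\pm\nu/2$ are the ones consistent with the hypothesis $\nu>n/2$.
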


\begin{proof}
    This follows immediately from Lemma \ref{parseval} via the Cauchy-Schwarz inequality to the functions $(1+ \|x\|^2)^\nu$ and $(1+ \|x\|^2)^{-\nu} \hat{f}(t_1X_1, \ldots t_nX_n) $.
\end{proof}

\subsection{Special functions}

For $\ell \in \frac{1}{2} \mathbb{Z} \setminus \mathbb{Z}$, $\ell > 0$ and $\Re z>0$ we have \cite[8.411.13]{GR}
\begin{equation}\label{bes}
J_{\ell}(z) = \frac{1}{\pi} \int_0^{\pi} \cos(k\theta - z \sin\theta) d\theta - \frac{1}{\pi} \int_0^{\infty} e^{-\ell \theta - z\sinh\theta} d\theta =: J_{\ell}^{(1)}(z) + J_{\ell}^{(2)}(z),
\end{equation}
say.  We note the following bound on $J_{\ell}^{(2)}(z)$.
\begin{lemma}
With the above notation, we have
\begin{equation}\label{J2} 
   \Big(z \frac{d}{dz}\Big)^j J_{\ell}^{(2)}(z) \ll_j \ell^{-1}
\end{equation}
for all $j \geq 0$. 
\end{lemma}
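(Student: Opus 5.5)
We have $J_\ell^{(2)}(z) = -\frac{1}{\pi}\int_0^\infty e^{-\ell\theta - z\sinh\theta}\,d\theta$ for $\Re z>0$; in the applications $z>0$ is real, so we work with $z>0$. The plan is to differentiate under the integral sign and then remove the factors of $z$ produced by differentiation, either by substitution or by integration by parts.

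For $j=0$ the bound is immediate. Since $z\sinh\theta\geq 0$, we get $|J_\ell^{(2)}(z)|\le \frac1\pi\int_0^\infty e^{-\ell\theta}d\theta = \frac{1}{\pi\ell}$.

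For $j\ge1$ the cleanest route is to avoid differentiating directly. Instead, we substitute $u = z\sinh\theta$, or equivalently observe that the operator $z\frac{d}{dz}$ acting on $e^{-z\sinh\theta}$ produces $-z\sinh\theta\, e^{-z\sinh\theta}$. This can be rewritten via
\[
-z\sinh\theta\, e^{-z\sinh\theta} = \tanh\theta\,\frac{\partial}{\partial\theta} e^{-z\sinh\theta},
\]
because $\partial_\theta e^{-z\sinh\theta} = -z\cosh\theta\, e^{-z\sinh\theta}$. Hence
\[
z\frac{d}{dz}J_\ell^{(2)}(z) = -\frac1\pi\int_0^\infty e^{-\ell\theta}\tanh\theta\,\partial_\theta e^{-z\sinh\theta}\,d\theta .
\]
We then integrate by parts in $\theta$. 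The boundary term vanishes: at $\theta=0$ because $\tanh 0=0$, and at $\infty$ because of exponential decay. This leaves
\[
\frac1\pi\int_0^\infty \partial_\theta\big(e^{-\ell\theta}\tanh\theta\big)e^{-z\sinh\theta}\,d\theta ,
\qquad
\partial_\theta\big(e^{-\ell\theta}\tanh\theta\big) = e^{-\ell\theta}\big(-\ell\tanh\theta + \operatorname{sech}^2\theta\big).
\]
The term $\operatorname{sech}^2\theta$ is bounded, so it contributes $\ll \int_0^\infty e^{-\ell\theta}d\theta \ll \ell^{-1}$. The term $\ell\tanh\theta$ is dangerous, and we handle it using $\tanh\theta\le\theta$:
\[
\ell\int_0^\infty \theta e^{-\ell\theta}\,d\theta = \ell^{-1}.
\]
This settles $j=1$.

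For general $j$ we iterate the same step. After $j-1$ steps the integrand has the form $g_{j-1}(\theta)\,e^{-\ell\theta}e^{-z\sinh\theta}$, with $g_{j-1}$ a finite sum of terms $\ell^a P_a(\theta)$. Each step applies $\tanh\theta\,\partial_\theta$ (the adjoint, up to sign) to $e^{-\ell\theta}g(\theta)$. We claim by induction that
\[
g_j(\theta)=\sum_{a\le j}\ell^a h_{a}(\theta),
\]
where each $h_a$ is smooth, bounded together with all its derivatives on $[0,\infty)$, and satisfies $h_a(\theta)\ll \min(1,\theta)^a$. In other words, every power of $\ell$ is paired with a vanishing factor $\tanh\theta$.

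This structure is preserved under the step $G\mapsto \partial_\theta(\tanh\theta\, G)$ applied to $G = e^{-\ell\theta}g$:
\begin{itemize}
\item hitting $e^{-\ell\theta}$ produces $\ell\tanh\theta\cdot g$, which raises $a$ by one and gains one factor $\min(1,\theta)$;
\item hitting $\tanh\theta$ or $h_a$ keeps $a$ fixed, and preserves the vanishing order up to lower-order losses, since $\tanh\theta\,h_a'$ still vanishes to order $a$ and $\operatorname{sech}^2\theta\,h_a$ is fine.
\end{itemize}
At the end each term is bounded by
\[
\ell^a\int_0^\infty \min(1,\theta)^a e^{-\ell\theta}\,d\theta \ll_a \ell^a\cdot \ell^{-a-1} = \ell^{-1},
\]
which gives the claim.

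The only real obstacle is this bookkeeping: we must check that differentiating $h_a$ does not destroy the vanishing order. This holds because we always differentiate $\tanh\theta\cdot(\dots)$, and $\theta\partial_\theta$ preserves vanishing order at $0$. So a precise inductive hypothesis of the form "$h_a$ is a polynomial in $\tanh\theta$ and $\operatorname{sech}^2\theta$, divisible by $\tanh^a\theta$" should close the argument. Alternatively, the substitution $\theta\mapsto$ a scaling of $t=\ell\theta$ can be used to present the estimate uniformly.
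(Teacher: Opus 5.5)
Your proof is correct and is essentially the paper's argument. It uses the same identity $z\partial_z e^{-z\sinh\theta}=\tanh\theta\,\partial_\theta e^{-z\sinh\theta}$, integration by parts with vanishing boundary terms, and iteration, which yields the paper's recursion $f_{j+1}=f_j(\ell\tanh\theta-\operatorname{sech}^2\theta)-\tanh\theta\, f_j'$; the only difference is bookkeeping: you write $g_j=\sum_{a\le j}\ell^a h_a$ with $\ell$-independent $h_a$ that are polynomials in $\tanh\theta$ divisible by $\tanh^a\theta$ (giving $|g_j|\ll_j\sum_a(\ell\theta)^a$), whereas the paper records the equivalent bound $f_j^{(\nu)}\ll_{j,\nu}(1+\ell\theta)^j\ell^{\nu}$.
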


\begin{proof}
This is clear for $j=0$, and to see it in general, we claim that  we have
\[  \Big(z \frac{d}{dz}\Big)^j J_{\ell}^{(2)}(z) = -\frac{1}{\pi} \int_0^{\infty} e^{-\ell \theta - z\sinh\theta} f_j(  \theta) d\theta \]
where $f_0(\theta) = 1$ and 
\[ f_{j+1}( \theta) = -f_j(\theta)\Big( \frac{1}{\cosh^2\theta} - \ell \tanh\theta\Big) - f'_j(\theta)\tanh\theta \]
satisfying $f^{(\nu)}_j(\theta) \ll_{j, \nu} (1 + \ell \theta)^j \ell^{\nu}.$
This implies \eqref{J2} by induction. The recursive formula follows from partial integration via
\begin{align}
&z \frac{d}{dz} \int_0^{\infty} e^{-\ell \theta - z\sinh\theta} f_j(  \theta) d\theta = -z\int_0^{\infty} e^{-\ell \theta - z\sinh\theta}\cosh(\theta) \tanh(\theta) f_j(  \theta) d\theta \\
& = 
- \int_0^{\infty} e^{  - z\sinh\theta}\frac{d}{d\theta} \big(e^{-\ell \theta} \tanh(\theta) f_j(  \theta)\big) d\theta . \qedhere
\end{align}
\end{proof}

We will often use that for small $x$ the Bessel function is negligible: we have
\begin{equation}\label{bes1}
  J_{\ell}(x) \ll 2^{-\ell}, \quad x \leq \ell/2, 
\end{equation}
which follows easily from \cite[8.411.4]{GR}. 
Finally we need   Olver's uniform asymptotic expansion (\cite[(4.24)]{Ol}; see also \cite[(4.4)]{Bl}): we have
\begin{align}\label{unif}
& J_{\ell}(x) =  \frac{\exp(i\psi(x, \ell) ) F_{\ell}^{+}(x) + \exp(-i\psi(x, \ell) ) F^{-}_{\ell}(x)}{(x^2 - \ell^2)^{1/4}} + O(\ell^{-200}), \\
& \psi(x, \ell) =  \sqrt{x^2 - \ell^2} - \ell \arctan(\sqrt{(x/\ell)^2 - 1})
\end{align}
for $x \geq 2\ell$, where $F_{\ell}^{\pm}$ are smooth non-oscillating functions  satisfying the uniform bounds $x^j (F_{\ell}^{\pm})^{(j)}(x) \ll_j 1$ for all $j \in \mathbb{N}_0$.  

We recall the important formula (\cite[2.12.20]{PBM}\footnote{Note that a factor $\pi$ is missing in \cite[2.12.20.5]{PBM}  in comparison with \cite[2.12.20.2/3]{PBM}.})
\begin{equation}\label{bes4}
2\Re \Bigl[ e\Big(- \frac{\ell+1}{4}\Big) \int_0^{\infty} e\Big((\alpha+\beta)z + \frac{\gamma}{z}\Big)J_{\ell}(4\pi \sqrt{\alpha \beta}z )\frac{dz}{z}\Bigr]    = 2\pi J_{\ell}(4\pi \sqrt{\alpha \gamma}) J_{\ell}(4\pi \sqrt{\beta \gamma}) 
\end{equation}
for $\alpha, \beta, \gamma > 0$. 


For $z > 0$ we consider the function
\begin{equation}\label{defE}
E(z) := \int_0^{\pi/2}  e\Big(\frac{\sin^2\theta}{z}\Big) \sin\theta\, d\theta =  e\Big(\frac{1}{z}\Big)z^{1/2}\int_0^{z^{-1/2}} e(-t^2) dt. 
\end{equation}
For $z > 1$, this is a flat function in the sense that  
\begin{equation}\label{Esmall}
   z^n E^{(n)}(z) \ll_n  1, \quad z > 1
   \end{equation}
   for $n \in \mathbb{N}_0$. 
   For $z \leq 1$, we write
\begin{equation}\label{Ebig}
E(z) = e\Big(\frac{1}{z}\Big)z^{1/2}\Big( \frac{1-i}{4} - \int_{z^{-1/2}}^{\infty}  e(-t^2) dt\Big) =: E_1(z) + E_2(z),
\end{equation}
say. We have 
\[ E_2(z) = - e\Big(\frac{1}{z}\Big) z^{1/2}  \int_{z^{-1/2}}^{\infty}  e(-t^2) dt = -\frac{1}{2} \int_0^{\infty} \frac{e(-t/z)}{\sqrt{t+1}} dt = -\frac{iz }{4\pi}\Big(1 + \int_0^{\infty} \frac{e(-t/z)}{2(t+1)^{3/2}} dt \Big) \]
Coupling each differentiation in $z$ with partial integration in $t$, we see that this term
\begin{equation}\label{Ebig2}
z^nE^{(n)}_2(z)  \ll_n z , \quad z \leq 1
\end{equation}
for all $n \in \mathbb{N}_0. $ In particular we have 
\begin{equation}\label{Egen}
E(z) \ll \min(1, z^{1/2})
\end{equation}
 in all cases.

\section{The diagonal term and the contribution of the lifts} \label{diag-section}

Let $\varepsilon> 0$ be arbitrarily small. To simplify the notation we will henceforth write $A \preccurlyeq B$ to mean $A \ll  k^{\varepsilon} B$, and similarly $A \succcurlyeq B$ for $A \gg_{\varepsilon} Bk^{-\varepsilon}$. We combine both directions in the notation $A \approxeq B$ to mean $B \preccurlyeq A\preccurlyeq B$. The value of $\varepsilon$  may change from line to line in a linear fashion, i.e.\ we may replace $\varepsilon$ by $5\varepsilon$, say, when we use the notations $\preccurlyeq$, $\succcurlyeq$  and $\approxeq$. 


We now start with the proof of Theorem \ref{thm1}. We substitute the approximate functional equation \eqref{approx} into the definition of $\mathcal{M}_k$ getting
\begin{align}
\mathcal{M}_k 
&= 8c_k\sum_{r, s} \frac{\chi_{-4}(s)}{4rs^{3/2}} \sum_{M \in {\rm SL}_2(\mathbb{Z}) \backslash M_2^+(\mathbb{Z})} \frac{1}{\det M} W\Big( \frac{r^2 s \det M}{k^2}\Big)  \sum_{F \in \mathcal{B}_k^{(2)}} \frac{a_F(I)a_F(MM^{\top})}{\| F \|^2}
\end{align}
with $c_k$ as in \eqref{ck}. Next we insert the formula \eqref{Kita} and start with the analysis of diagonal term. Since $\text{Aut}(I) = 8$ it equals
\begin{align}
2 \sum_{r, s} \frac{\chi_{-4}(s)}{rs^{3/2}}   W\Big( \frac{r^2 s }{k^2}\Big)& =2\int_{(2)} \zeta(1 + 2u)L(3/2 + u, \chi_{-4}) \frac{D_{F,\infty}(u+1/2)}{D_{F,\infty}(1/2)}e^{u^2} \frac{1-4u^2}{u} \frac{du}{2\pi i}\\
& = 2 L(3/2, \chi_{-4}) \log k  + c_0 + O(1/k)
\end{align}
with
\[c_0 = L(3/2, \chi_{-4}) \Big(\frac{3}{2} \gamma - \frac{\pi}{4} -\frac{7}{2} \log 2 - \frac{5}{2} \log \pi\Big) + L'(3/2, \chi_{-4}) \]
as one can see by a contour shift to $\Re u = -1/2$, picking up the double pole at $u = 0$ and recalling the definition of $D_{F,\infty}$ in \eqref{Linf}. To derive the value of the  constant $c_0$ one can use $\frac{\Gamma'}{\Gamma}(1/4) = -\gamma - \frac{\pi}{2} - \log 8$ by \cite[8.364.6]{GR} and $\frac{\Gamma'}{\Gamma}(k) = \log k + O(1/k)$ by Stirling's formula. 


For the contribution to $\mathcal{M}_k$ of the lifts $F$ coming from an elliptic form $f$  we recall \eqref{lift} as well as the fact that in this case we have $$\omega_F \asymp \frac{1}{k^3} \frac{L(1/2, f \times \chi_{-4})}{L(1,\text{sym}^2 f)}$$
(cf.\ \cite[Theorem 3.11]{DPSS}). Thus the contribution of the lifts is at most
$$\ll \sum_f  \frac{1}{k^3}  \frac{L(1/2, f \times \chi_{-4}) L(0, f) L(1, f)}{L(1, \text{sym}^2 f)}  \preccurlyeq k^{  - 1/2}.$$
Here we use $L(0, f)L(1, f) \ll k L(1, f)^2 \preccurlyeq k $ by the functional equation, $1/L(1, \text{sym}^2 f) \preccurlyeq 1$, and $L(1/2, f \times \chi_{-4}) \preccurlyeq k^{1/2 }$ by the convexity bound, and the dimension of the space of elliptic cusp forms of weight $k$ is $O(k)$. 


\section{The rank one term}\label{rank1}

The rank one term equals
\begin{align}
\mathcal{R}_1 :=  \sum_{r, s} &\frac{\chi_{-4}(s)}{4rs^{3/2}} \sum_{M \in {\rm SL}_2(\mathbb{Z}) \backslash M_2^+(\mathbb{Z})} \frac{1}{\det M} W\Big( \frac{r^2 s \det M}{k^2}\Big) \sum_{\pm} \sum_{d, c} \sum_{U, V} \frac{(-1)^{k/2}\sqrt{2}\pi}{c^{3/2} d^{1/2}} \\
 &\times H^{\pm} (U MM^{\top} U^{\top}, V^{-1}V^{-\top}, c) J_{\ell} \Big(\frac{4\pi \det M}{cd}\Big)
 \end{align}
with the same summation conditions as in \eqref{Kita}. 

The matrix $UM$ runs through $\Gamma_{\infty} \backslash M^+_2(\mathbb{Z})$. Let us fix $\Delta = \det M$. Then $UM = (\begin{smallmatrix}\alpha  & \beta\\   \gamma&  \delta\end{smallmatrix})$, where $(\gamma, \delta) \mid \Delta$. If $\gamma \not= 0$, then $\alpha$ runs modulo $\gamma$ and satisfies $\alpha \equiv \frac{\Delta}{(\gamma, \delta)} \overline{\frac{\delta}{(\gamma, \delta)}}$ (mod $\frac{\gamma}{(\gamma, \delta)}$), and then $\beta$ is determined by $\Delta = \alpha \delta - \beta \gamma$. If $\gamma = 0$, then $\alpha = \Delta/\delta$ and $\beta$ runs modulo $\delta$. In either case, $(\alpha, \beta)$ runs through a set $S_{\Delta}(\gamma, \delta)$ of cardinality $(\gamma, \delta)$. Let $T(\gamma, \delta)$ be the set of matrices $V =  \left(\begin{smallmatrix} v_1 & *\\ v_3 &* \end{smallmatrix}\right)$ as above such that $v_1^2 + v_3^2 = \gamma^2 + \delta^2$, so its cardinality is $\ll (\gamma^2 + \delta^2)^{\varepsilon}$. 

With this notation we obtain the bound
\begin{align}
\mathcal{R}_1\ll \Big| \sum_{r, s} &\frac{\chi_{-4}(s)}{rs^{3/2}} \sum_{\Delta}  \sum_{(\gamma, \delta) \mid \Delta} \sum_{(\alpha, \beta) \in S_{\Delta}(\gamma, \delta)}
 \sum_{V \in T(\gamma, \delta)} \frac{1}{\Delta} W\Big( \frac{r^2 s \Delta}{k^2}\Big)   \sum_{c}   \frac{1}{c^{3/2} (\gamma^2 + \delta^2)^{1/2}} \\
 &\times H^{\pm} \Big(\left(\begin{smallmatrix}\alpha^2 + \beta^2& \alpha\gamma + \beta \delta\\ \alpha \gamma + \beta\delta& \gamma^2 + \delta^2 \end{smallmatrix}\right) , V^{-1}V^{-\top}, c\Big) J_{\ell} \Big(\frac{4\pi \Delta }{c(\gamma^2 + \delta^2)}\Big)\Big|. 
 \end{align}
Since $\det V = 1$, we can apply Corollary \ref{cor1}, and we will use the notation $c = q2^{\kappa}$ with $q$ odd, $\kappa \in \mathbb{N}_0$.   With  $\rho = (\gamma, \delta)$, the previous display becomes
\begin{align}
\Big| \sum_{r, s} &\frac{\chi_{-4}(s)}{rs^{3/2}} \sum_{\Delta}\sum_{\rho}  \sum_{(\gamma, \delta)=1} \sum_{(\alpha, \beta) \in S_{\Delta}(\gamma, \delta)}
 \sum_{V \in T(\gamma, \delta)} \frac{1}{\rho\Delta} W\Big( \frac{r^2 s \rho\Delta}{k^2}\Big)   \sum_{c}   \frac{1}{c^{3/2} \rho(\gamma^2 + \delta^2)^{1/2}} \\
 &\times H^{\pm} \Big(\left(\begin{smallmatrix}\alpha^2 + \beta^2& \alpha\gamma + \beta \delta\\ \alpha \gamma + \beta\delta& \gamma^2 + \delta^2 \end{smallmatrix}\right) , V^{-1}V^{-\top}, c\Big) J_{\ell} \Big(\frac{4\pi \Delta }{c\rho(\gamma^2 + \delta^2)}\Big)\Big|. 
 \end{align}



\begin{remark}
To get a feeling for the order of magnitude, consider the case $r = s =\rho = 1$, $c$ odd. Then we are summing
$$\sum_{\Delta \preccurlyeq	 k^2} \frac{1}{\Delta} \sum_c \frac{H(*, *, c)}{c^{3/2}} \sum_{m} \frac{r(m)}{m^{1/2}}   J_{\ell}\Big(\frac{4\pi \Delta}{cm}\Big)$$
where $m = \gamma^2 + \delta^2$ and $r(m)$ is the number of representations of $m$ as a sum of two squares. If we use the approximate bounds $H(c) \ll c$ and  $J_{\ell}(x) \ll x^{-1/2}$, 
 $cm \ll \Delta/k$ from \eqref{bes1} and \eqref{unif},  the trivial bound is  $O(1)$, which just fails. However, if we decompose $J_{\ell} = J_{\ell}^{(1)} + J_{\ell}^{(2)}$ as in \eqref{bes}, then for the portion involving $J_{\ell}^{(2)}$  the trivial bound suffices. For the other part, the saving will come from Poisson summation in $\Delta$ modulo $c$. We now make this strategy precise. 
\end{remark}
First we drop the terms with $\Delta/c\rho(\gamma^2 + \delta^2) \ll k/100$ at the cost of a negligible error by \eqref{bes1}. We remember this by inserting a smooth function $v(\frac{\Delta}{kc\rho(\gamma^2 + \delta^2)})$ where $v(x) = 1$ for $x \geq 1/80$ and $v(x) = 0$ for $x < 1/100$. Then we decompose the Bessel function as in \eqref{bes}. As mentioned above, for the second piece, the trivial bound suffices by Corollary \ref{cor1} and \eqref{J2} to obtain
\begin{equation}\label{1a}
\begin{split}
  \preccurlyeq  & \sum_{r, s} \frac{1}{rs^{3/2}} \underset{c\rho(\gamma^2 + \delta^2) \ll \Delta/k}{\sum_{\Delta \ll \frac{k^2}{r^2s}}\sum_{c= q2^{\kappa}}  \sum_{\rho}  \sum_{\gamma, \delta} } \frac{2^{\kappa}}{\rho \Delta c^{1/2-\varepsilon}  (\gamma^2 + \delta^2)^{1/2 }  } \frac{1}{k}\\
& \preccurlyeq   \sum_{r, s} \frac{1}{rs^{3/2}} \sum_{\Delta \preccurlyeq \frac{k^2}{r^2s}} \frac{1}{\Delta} \Big(\frac{\Delta}{k}\Big)^{1/2} \frac{1}{k} \preccurlyeq k^{-1/2}.
\end{split}
\end{equation}

We now turn towards the contribution involving $J_{\ell}^{(1)}$. 
We split $\Delta$ in residue classes modulo $c$ getting a bound
\begin{align}
\ll \Big| \sum_{r, s} &\frac{\chi_{-4}(s)}{rs^{3/2}} \sum_{c} \sum_{\rho}  \sum_{\substack{\Delta_0\, (\text{mod } c)\\ (c, \rho) \mid \Delta_0 }}  \sum_{(\gamma, \delta) = 1} \sum_{(\alpha, \beta) \in S_{\Delta_0}(\rho\gamma, \rho\delta)}
 \sum_{V \in T(\rho\gamma, \rho\delta)}    \frac{H^{\pm}(*, *, c)}{c^{3/2} \rho(\gamma^2 + \delta^2)^{1/2}} \\
 &
\times  \sum_{\rho \Delta \equiv \Delta_0 \, (\text{mod }c) } \frac{1}{\rho \Delta}v\Big(\frac{\Delta}{kc\rho(\gamma^2 + \delta^2)}\Big) W\Big( \frac{r^2 s \rho \Delta}{k^2}\Big) J_{\ell}^{(1)} \Big(\frac{4\pi \Delta }{c\rho(\gamma^2 + \delta^2)}\Big)\Big|. 
\end{align}
By Poisson summation, the inner sum equals
$$\frac{(\rho, c)}{\rho c} \sum_h e\Big( \frac{h \frac{\Delta_0}{(c, \rho)} \overline{\frac{\rho}{(c, \rho)}}}{c/(c, \rho)}\Big) \int \frac{1}{x}v\Big(\frac{x}{kc\rho(\gamma^2 + \delta^2)}\Big) W\Big( \frac{r^2 s \rho x}{k^2}\Big) J_{\ell}^{(1)} \Big(\frac{4\pi x }{c\rho(\gamma^2 + \delta^2)}\Big) e\Big( - \frac{xh}{c/(c, \rho)} \Big) dx$$
Inserting \eqref{bes}, the integral is a linear combination of two terms (according to $\pm$)
\[ \int_0^{\pi} \int \frac{W(x)}{x}v\Big(\frac{xk}{r^2sc\rho^2(\gamma^2 + \delta^2)}\Big)  e \Big(\pm \ell \theta \mp \frac{2 x  k^2 }{r^2 sc\rho^2(\gamma^2 + \delta^2)} \sin\theta \Big) e\Big( - \frac{xh k^2(c, \rho)}{r^2 s \rho c} \Big) dx \, d\theta. \]
We split the $x$-integral into dyadic ranges $x \asymp X \preccurlyeq 1$ (as $X \succcurlyeq 1$ contributes negligibly by the rapid decay of $W$) and put 
\begin{align} \label{}
\Xi := \frac{Xk^2}{r^2 s c \rho^2 (\gamma ^2 + \delta^2)} \gg k,
\end{align}
 where the lower bound follows from the support of $v$. Integration by parts in the $x$-integral shows first $h\preccurlyeq 1$ 
 (and in fact also $\gamma, \delta \preccurlyeq 1$, but we do not need this information).  


Let us fix a small constant $0 < \eta < 1/2$ and  restrict first to $|\theta - \frac{\pi}{2}| > k^{-\eta}$. Once $h$ is fixed,  integration by parts in the $x$-integral restricts $\sin \theta$ to an interval of length $$\preccurlyeq L := \frac{r^2 s c \rho^2 (\gamma^2 + \delta^2)}{k^2} = \frac{X}{\Xi} \preccurlyeq \frac{1}{k}.$$  By the mean value theorem, this restricts $\theta$  to an interval of length $\preccurlyeq L k^{\eta}$, and   hence bounds the $x, \theta$-integral by $\preccurlyeq L k^{\eta}$.  
We obtain the bound
\begin{align}
\preccurlyeq   \sum_{r, s} &\frac{1}{rs^{3/2}} \underset{r^2 \rho^2 s c (\gamma^2 + \delta^2) \ll k }{\sum_{c = q2^{\kappa}} \sum_{\rho}\sum_{(\gamma, \delta) = 1}  }    \frac{2^{\kappa}}{c^{3/2} \rho(\gamma^2 + \delta^2)^{1/2}} \frac{(\rho, c)}{\rho c} \frac{r^2 s c \rho^2 (\gamma^2 + \delta^2)}{k^2} k^{\eta}  \\
& \times \sum_{h \preccurlyeq 1} \Big|\sum_{V \in T(\rho\gamma, \rho\delta)}   \sum_{\substack{\Delta_0\, (\text{mod } c)\\ (c, \rho) \mid \Delta_0 }}   \sum_{(\alpha, \beta) \in S_{\Delta_0}(\rho\gamma, \rho\delta)} 
H^{\pm}(*, *, c)
e\Big( \frac{h \frac{\Delta_0}{(c, \rho)} \overline{\frac{\rho}{(c, \rho)}}}{c/(c, \rho)}\Big) \Big|  
\end{align}
By Corollary \ref{cor1}, the second line is bounded by $\preccurlyeq c^2 2^{\kappa} \rho$. This gives the total bound
\begin{equation}\label{1b}
\begin{split}
& \preccurlyeq  k^{\eta} \sum_{r, s} \frac{1}{rs^{3/2}} \underset{r^2 \rho^2 s c (\gamma^2 + \delta^2) \ll k }{\sum_{c=q2^{\kappa}} \sum_{\rho}\sum_{(\gamma, \delta) = 1}  }    \frac{2^{\kappa}}{c^{3/2} \rho(\gamma^2 + \delta^2)^{1/2}} \frac{(\rho, c)}{\rho c} \frac{r^2 s c \rho^2 (\gamma^2 + \delta^2)}{k^2}  c^2 \rho \\
&\preccurlyeq  k^{\eta}  \sum_{r, s} \frac{r}{s^{1/2}} \underset{r^2 \rho^2 s c (\gamma^2 + \delta^2) \ll k }{\sum_{c=q2^{\kappa}} \sum_{\rho}\sum_{(\gamma, \delta) = 1}  }        \frac{  2^{\kappa}  (\gamma^2 + \delta^2)^{1/2} c^{1/2} \rho(\rho, c)}{k^2}  \preccurlyeq k^{\eta -1/2}. 
\end{split}
\end{equation}
Let us now consider the remaining portion $|\theta - \frac{\pi}{2}| \ll k^{-\eta}$. If $\Xi \ll k^{1+\eta}$ with a sufficiently small implied constant, then the function $\theta \mapsto \pm \ell \theta \mp \frac{2 x  k^2 }{r^2 sc\rho^2(\gamma^2 + \delta^2)} \sin\theta$ has a derivative of size $\asymp k$. Hence 
Lemma \ref{integrationbyparts}  with 
$$R = k, \quad Y = \Xi, \quad Q = 1, \quad U = k^{-\eta}$$ 
shows that the $\theta$-integral is negligible. Therefore we may assume that $\Xi \gg k^{1+\eta}$. Again, integration by parts restricts $\sin\theta$ to an interval of length $\preccurlyeq L$ as above and hence $\theta$ to an interval of length $ \preccurlyeq L^{1/2}$. To see this, one can write $\theta- \pi/2=\beta$, so that $\sin \theta = \cos \beta \asymp 1- \beta^2/2$, since $\beta$ is very small. In the same way as before we obtain the total bound
\begin{equation}\label{1c}
\begin{split}
& \preccurlyeq   \sum_{r, s} \frac{1}{rs^{3/2}} \underset{r^2 \rho^2 s c (\gamma^2 + \delta^2) \preccurlyeq k^{1-\eta} }{\sum_{c=q2^{\kappa}} \sum_{\rho}\sum_{(\gamma, \delta) = 1}  }    \frac{2^{\kappa}}{c^{3/2} \rho(\gamma^2 + \delta^2)^{1/2}} \frac{(\rho, c)}{\rho c}\Big( \frac{r^2 s c \rho^2 (\gamma^2 + \delta^2)}{k^2}\Big)^{1/2}  c^2 \rho \\
&\preccurlyeq     \sum_{r, s}   \underset{r^2 \rho^2 s c (\gamma^2 + \delta^2) \preccurlyeq k^{1-\eta} }{\sum_{c=q2^{\kappa}} \sum_{\rho}\sum_{(\gamma, \delta) = 1}  }        \frac{ 2^{\kappa} (\rho, c)}{sk}  \preccurlyeq k^{-\eta}. 
\end{split}
\end{equation}
Combining \eqref{1a}, \eqref{1b} and \eqref{1c} and choosing $\eta = 1/4$, we have shown   $\mathcal{R}_1 \preccurlyeq k^{-1/4}$. 
 
\section{The rank two term}\label{rank2} 

With the notation as in \eqref{Kita}, recall that  the rank two term is
\begin{align}
\mathcal{R}_2 :=   \sum_{r, s} \frac{\chi_{-4}(s)}{4rs^{3/2}} \sum_{M \in {\rm SL}_2(\mathbb{Z}) \backslash M_2^+(\mathbb{Z})} \frac{1}{\det M} & W\Big( \frac{r^2 s \det M}{k^2}\Big) \times \\
& \times \sum_{\det C \not= 0} \frac{K(I, MM^{\top}, C)}{|\det C|^{3/2}} \mathcal{J}_k(MM^{\top} C^{-1} C^{-\top}).
\end{align}

\subsection{Initial reductions} 
We start by confirming that this expression is well-defined: if we replace $M$ with $UM$ for some $U \in {\rm SL}_2(\mathbb{Z})$, then $K(I, UMM^{\top}U^{\top}, C) = K(I, MM^{\top}, C U^{-\top})$ (see \cite[p.\ 153]{Ki}) and $$\mathcal{J}_k(UMM^{\top} U^{\top}C^{-1} C^{-\top}) = \mathcal{J}_k( MM^{\top} U^{\top}C^{-1} C^{-\top}U) = \mathcal{J}_k( MM^{\top} (CU^{-\top})^{-1} (CU^{-\top})^{-\top} ), $$
so each term remains unchanged under the move $(M, C) \mapsto (UM, CU^{\top})$.  


For a given value of $d \in \mathbb{Z}\setminus \{0\}$, let $V_d$ be a set of representatives $\{C \in M_2(\mathbb{Z}) \mid \det C = d\}/{\rm SL}_2(\mathbb{Z})$. The cardinality of $V_d$ is polynomial in $d$, and we may assume that the norm of each $J \in V_d$ is polynomial in $d$. If $\det C = d$, we write $C = JC_0$ with $C_0\in {\rm SL}_2(\mathbb{Z})$ and $J \in V_d$.  Let $N = M^{\top}C_0^{-1}$, which runs through $M^+_2(\mathbb{Z})$. Then
$$K(I, MM^{\top}, C)\mathcal{J}_k(MM^{\top} C^{-1} C^{-\top}) = K(I, N^{\top}N, J)\mathcal{J}_k( N J^{-1}J^{-\top} N^{\top}). $$
Note that the Kloosterman sum depends only on $N$ modulo $d$. 

Next we turn to size considerations, using the following two observations: 

(i) From the definition \eqref{defJ} of $\mathcal{J}_k$ and \eqref{bes1},   the smallest eigenvalue of $NJ^{-1}J^{-\top}N^{\top}$, and hence in particular $ \det NJ^{-1}$, must be $\geq k^2/100$, otherwise $\mathcal{J}_k$ is negligible.  

(ii) On the other hand, by the rapid decay of the weight function $W$ we can truncate $r^2 s  \det M = r^2 s \det N  \preccurlyeq k^2$ at the cost of a negligible error, and so we have $\det NJ^{-1} \preccurlyeq k^2/(r^2 s\det J)$. 

Combining these two facts, we can truncate
\[ r, \, s, \, \det J  \preccurlyeq 1, \quad \det N \gg k^2\]
As mentioned above, we can choose a convenient system of representatives for $J$, e.g.\ 
lower triangular matrices, 
so that $\det J \preccurlyeq 1$ also implies $\| J \| \preccurlyeq 1$. 
Moreover, $\det N \gg k^2$ clearly implies $\| N \| \gg k$. Finally note that 
\[  N^{\text{adj}} (N^\top)^{\text{adj}} \le \frac{100 (\det N)^2}{k^2} J^{-1}J^{-\top}\]
(as an inequality of positive definite matrices). 
Taking the trace on both sides, we obtain   $\|N \| = \|N^{\mathrm{adj}}\| \preccurlyeq k$ using (ii).  


\medskip

Hence in order to bound the rank two term, we fix some $J \in \bigcup_{0 \not= d \preccurlyeq 1} V_d$ with $\det J = d$ and then some $N_0 \in M_2(\mathbb{Z}/d  \mathbb{Z})$. It then suffices to estimate 
$$ \frac{1}{k^2}  \sum_{\substack{N \in   M_2^+(\mathbb{Z})\\ N \equiv N_0\, (\text{mod } d)}}  W_1\Big(\frac{\| N \|}{K_1}\Big) W_2\Big(\frac{\det N}{K_2^2}\Big)   \mathcal{J}_k( N J^{-1}J^{-\top} N^{\top}). 
$$
for two fixed smooth functions $W_1, W_2$ with compact support in $(0, \infty)$ and $k \ll K_2 \ll K_1 \preccurlyeq k$. From the weight function it is automatic that $\det N > 0$. 

Next we insert \eqref{bes4} and \eqref{defE} into the definition \eqref{defJ} of $\mathcal{J}_k$ to conclude that it suffices to estimate
\begin{equation}\label{prelim}
\frac{1}{k^2}  \sum_{\substack{N \in   M_2(\mathbb{Z})\\ N \equiv N_0\, (\text{mod } d)}}  W_1\Big(\frac{\| N \|}{K_1}\Big) W_2\Big(\frac{\det N}{K_2^2}\Big)    \int_0^{\infty} e\big(\| N J^{-1}\|^2 z \big) J_{\ell}\big(4\pi z  \det NJ^{-1}\big) E(z) \frac{dz}{z}. 
\end{equation}
      We split the absolutely convergent $z$-integral into dyadic ranges and insert a weight function $W_3(z/Z)$ where $W_3$ has compact support in $[1, 2]$. By \eqref{bes1} we may assume $1/k \preccurlyeq Z$  at the cost of a negligible error. We also note that the function $$W : N = \left(\begin{matrix} n_1 & n_2 \\ n_3 & n_4\end{matrix}\right)\mapsto W_1\Big(\frac{\| N \|}{K_1}\Big) W_2\Big(\frac{\det N}{K_2^2}\Big) $$
  is ``almost flat'' in the sense that
\begin{equation}\label{flat}
 \| n_1^{i_1} \cdots n_4^{i_4} W^{(i_1, \ldots, i_4)} \|_{\infty} \preccurlyeq_{i_1, \ldots, i_4} 1
 \end{equation}
  for all $i_1, \ldots, i_4 \in \mathbb{N}_0$. We will use this frequently in combination with \eqref{quadrstatph} in stationary phase arguments. 
 
 Before we formally analyze the expression \eqref{prelim}, it might be instructive to get a feeling how much saving is required.     The $N$-sum has roughly $k^4$ terms. The generic range is $Z \asymp 1/k$, in which case the Bessel function and the $E$-function are both roughly of size $1/k^{1/2}$, so the trivial bound is (generically) $k$, while the goal is a negative power of $k$. The saving will come from Poisson summation in the $N$-sum, since the integrand is typically highly oscillatory. However, if $NJ^{-1}$ is close to being orthogonal, then $\| N J^{-1}\|^2 \approx \pm 2 \det(NJ^{-1})$, which cancels large parts of the oscillation. In addition, when $z$ is large, the length of the dual sum increases. Both issues have to be treated with  additional care. 
       
 
For the  subsequent analysis we need several  layers of $\varepsilon$-exponents that are still arbitrarily small, but some are larger than others.  To formalize this, for $n = 2, 3, \ldots$, we 
 introduce the notation 
 \begin{equation}\label{layered}
 A \preccurlyeq_{n} B \quad \text{and equivalently} \quad B \succcurlyeq _{n} A
 \end{equation}
to mean $A \ll  B k^{\varepsilon^{1/n}}$. 
 Again the value of $\varepsilon$   may change from line to line in a linear fashion, but $\varepsilon$ is assumed to be so small that 
all implicit and explicit instances $\varepsilon^{1/n}$    are always assumed to be bigger than the explicit and implicit instances of $\varepsilon^{1/m}$ for $m < n$.

\subsection{The generic range}
      
We start the analysis with the generic range where $$1/k \preccurlyeq Z \preccurlyeq_{3} 1/k.$$ 

We insert \eqref{bes} and start with the contribution of $J_{\ell}^{(2)}$ where we recall  \eqref{J2}. We apply Poisson summation in $N$, so that it remains to bound
\begin{align}
&\frac{1}{k^2} \sum_{H \in M_2(\mathbb{Z})}  \Big|   \int_0^{\infty} W_3\Big(\frac{z}{Z}\Big)E(z) \\
   & \times \int_{M_2(\mathbb{R})} W_1\Big(\frac{\| N \|}{K_1}\Big) W_2\Big(\frac{\det N}{K_2^2}\Big) J^{(2)}_{\ell}\big(4\pi z  \det NJ^{-1} \big) e\Big(\| N J^{-1}\|^2 z +\frac{1}{d}\text{tr} N H^{\top}\Big) dN \,  \frac{dz}{z}\Big| . 
\end{align}
The measure $dN$ is the usual entrywise Lebesgue measure. Integration by parts shows quickly that $\| H \| \preccurlyeq_{3} 1$ in the present ranges $$\| N \|\approxeq k^{2}, \quad 1/k \preccurlyeq z \preccurlyeq_{3} k^{-1 }, \quad \| J \| \approxeq 1.$$ Now we can apply a   stationary phase argument in form of the bound \eqref{quadrstatph} with $A = J^{-1} J^{-\top} z$ and 
$$f :  N = \left(\begin{matrix} n_1 & n_2 \\ n_3 & n_4\end{matrix}\right)\mapsto W_1\Big(\frac{\| N \|}{K_1}\Big) W_2\Big(\frac{\det N}{K_2^2}\Big) J^{(2)}_{\ell}\big(4\pi z  \det NJ^{-1} \big)$$
which by \eqref{J2} and \eqref{flat} satisfies
$$ \| n_1^{i_1} \cdots n_4^{i_4} f^{(i_1, \ldots, i_4)} \|_{\infty} \preccurlyeq_{3} k^{-1}.$$
  for all fixed $i_1, \ldots, i_4 \in \mathbb{N}_0$. Recalling in addition \eqref{Egen}, we obtain   the bound
\begin{equation}\label{2a}
\preccurlyeq \frac{1}{k^2}  \int_{z \asymp Z} z^{1/2}  \frac{1}{k} z^{-2} \, \frac{dz}{z} \preccurlyeq_{3} k^{-3/2}.
\end{equation}

Next we consider the contribution of $J_{\ell}^{(1)}$. By the same argument involving Poisson summation we have to bound
\begin{align}
&\frac{1}{k^2} \sum_{H \in M_2(\mathbb{Z})}  \Big|  \int_0^{\pi} \int_0^{\infty} W_3\Big(\frac{z}{Z}\Big)E(z)  
    \int_{M_2(\mathbb{R})} W_1\Big(\frac{\| N \|}{K_1}\Big) W_2\Big(\frac{\det N}{K_2^2}\Big)  \\
    & \times e\Big( \pm\frac{1}{2\pi}  \ell \theta\mp 2  (\det N J^{-1}) z \sin\theta  +\| N J^{-1}\|^2 z +\frac{1}{d}\text{tr} N H^{\top}\Big) dN \,  \frac{dz}{z}\, d\theta . 
\end{align}
Again we see by partial integration that we can truncate $\| H \| \preccurlyeq_{3} 1$ at the cost of a negligible error. In the $\theta$-integral
$$\int_0^{\pi} e\Big( \pm\frac{1}{2\pi}  \ell \theta\mp 2 ( \det N J^{-1})z \sin\theta \Big) d\theta $$
 we first consider  the portion $|\theta - \pi/2| \leq P = k^{-\varepsilon^{1/4}}$. Then  $z (\det NJ^{-1}) \cos \theta \preccurlyeq_{3} kP$ is strictly dominated by $k$. 
We extract this 
region in a smooth way, i.e.\ by inserting a smooth weight function $v( |\theta - \pi/2| /P)$ where $v$ has support in $[-2, 2]$. By Lemma  \ref{integrationbyparts} with 
$$R = k, \quad Y = k, \quad Q = 1, \quad U = P$$
we see that this portion  is negligible. Next we consider the remaining part with a weight function  $1-v( |\theta - \pi/2|/P)$ included. Here we apply a stationary phase argument in the $N$-integral 
\begin{align}
&   \int_{M_2(\mathbb{R})} W_1\Big(\frac{\| N \|}{K_1}\Big) W_2\Big(\frac{\det N}{K_2^2}\Big)  e\Big( \mp 2 (\det NJ^{-1}) z \sin\theta  +\| N J^{-1}\|^2 z + \frac{1}{d}\text{tr} N H^{\top}\Big) dN\\
     & = d\int_{M_2(\mathbb{R})} W_1\Big(\frac{\| N J \|}{K_1}\Big) W_2\Big(\frac{d^2 \det N}{K_2^2}\Big)  e\Big( \mp 2z \det N   \sin\theta  +\| N  \|^2 z +\frac{1}{d}\text{tr} N (HJ^{\top})^{\top}\Big) dN
\end{align}
using \eqref{quadrstatph} with the matrix
\[A = z \left( \begin{matrix} 1 & & & \mp\sin\theta \\ & 1& \pm \sin\theta & \\ & \pm \sin\theta & 1 \\ \mp \sin\theta & & & 1\end{matrix}\right)\]
with  $ \det A  =  z^4 \cos^4\theta $ and recalling \eqref{flat}.  
Using again \eqref{Egen} in connection with \eqref{quadrstatph}, we obtain the bound
\begin{equation}\label{2b}
\preccurlyeq  \frac{1}{k^2} \int_{ |\theta - \pi/2| \geq P} \int_{z \asymp Z} z^{1/2} \frac{1}{ z^2 \cos^2\theta}\frac{dz}{z} \preccurlyeq_{4} k^{-1/2}.
\end{equation}
The bounds \eqref{2a} and \eqref{2b} are clearly acceptable. 

\subsection{The other ranges for \texorpdfstring{$Z$}{Z}}

Let us now assume that
\begin{equation}\label{Z}
   Z \gg  k^{-1+\varepsilon^{1/3}}. 
   \end{equation} 
 We will show in this subsection that   the corresponding parts of \eqref{prelim} are negligible, but this requires some work. 
   
\subsubsection{Preliminary reductions} The condition \eqref{Z} implies first that we can insert the uniform asymptotic expansion \eqref{unif} in \eqref{prelim}, the error being negligible.  We write 
$$W^{\pm}_4(z; N) := W_3(z) \frac{F^{\pm}_{\ell}(4\pi zZ\det NJ^{-1})}{((4\pi zZ \det NJ^{-1})^2 - \ell^2)^{1/4}} Z^{1/2} k$$ with the notation as in \eqref{unif} and observe that $W_4$ is again ``almost flat'' in all variables: 
\begin{equation}\label{flatnew}
  \| z^{i_0} n_1^{i_1} \cdots n_4^{i_4} W_4^{(i_0; i_1, \ldots, i_4)} \|_{\infty} \preccurlyeq_{3} 1
  \end{equation}
for all fixed  $i_0, \ldots, i_4 \in \mathbb{N}_0$. With this notation, we start by investigating the $z$-integral, which is a $\pm$-linear combination of  
\begin{align}\label{zint}
&\quad\quad \frac{1}{Z^{1/2} k} \int_0^{\infty} W_4\Big(\frac{z}{Z}; N\Big) E(z) \\
 &\times e\Big[\| N J^{-1} \|^2z \pm 2 \Big(\sqrt{(z \det NJ^{-1})^2 - \ell_0^2}  -  \ell_0 \arctan\Big(\frac{1}{\ell_0}\sqrt{ (z\det NJ^{-1} )^2 - \ell_0^2}\Big) \Big) \Big]\frac{dz}{z}
 \end{align}
where $\ell_0 = \ell/(4\pi)$. We recall that for $Z \ll 1$ the function $E$ consists of two terms, one of which has a phase $e(1/z)$ (see \eqref{Ebig}), while for $Z \gg 1$, the function $E$ is flat (see \eqref{Esmall}).  

With this in mind, let us define
\[\phi_{\pm}(z) = \frac{\sigma}{z} + \| N J^{-1} \|^2z \pm 2 \Big(\sqrt{(z \det NJ^{-1})^2 - \ell_0^2}  -  \ell_0 \arctan\Big(\frac{1}{\ell_0}\sqrt{ (z\det NJ^{-1} )^2 - \ell_0^2}\Big) \Big)\]
for $\sigma \in \{0, 1\}$. We compute
\[ \phi_{\pm}'(z) = -\frac{\sigma}{z^2} + \| N J^{-1} \|^2 \pm  2 \frac{\sqrt{(z \det NJ^{-1})^2 - \ell_0^2} }{z} \]
and
\[  \phi^{(j)}_{\pm}  \ll \frac{1}{Z^{j+1}} + \frac{k^2}{Z^{j+1} (\det NJ^{-1})}\preccurlyeq \frac{1}{Z^{j+1}} \]
for all fixed  $j \geq 2$.  In the plus-case, we have $    \phi'_{+}  \succcurlyeq k^2$ 
and hence by Lemma \ref{integrationbyparts} with  (recall \eqref{flatnew})$$R \succcurlyeq k^2,  \quad U\succcurlyeq_{3} Z, \quad  Q \succcurlyeq Z, \quad Y \preccurlyeq \frac{1}{Z}$$
the $z$-integral is negligible by \eqref{Z}. 

In the minus case, by a Taylor argument we have $$\phi_{-}'(z) = - \frac{\sigma }{z^{2}} + \| N J^{-1} \|^2 -  2 |\det NJ^{-1} |+ O\Big(\frac{k^2}{Z^2 \det NJ^{-1}}\Big).$$ Suppose that  $\| NJ^{-1} \|^2 - 2 |\det NJ^{-1} | \geq k^{\varepsilon^{1/n}} (1+Z^{-2})$ where $n = 2$ if $Z \leq k^{-1/10}$ and $n = 4$ if $Z > k^{-1/10}$.  Then by  Lemma  \ref{integrationbyparts} with $$R\geq k^{\rho} (1+Z^{-2}),  \quad U   \succcurlyeq_{3} Z, \quad Q \succcurlyeq Z, \quad Y \preccurlyeq \frac{1}{Z}$$
we see that the $z$-integral is negligible. So from now on we restrict to the minus case, and we assume 
\begin{equation}\label{24}
\| NJ^{-1} \|^2 - 2 |\det NJ^{-1} | \preccurlyeq_{n} 1+ Z^{-2}
\end{equation}
with  $n = 2$ if $Z \leq k^{-1/10}$ and $n = 4$ if $Z > k^{-1/10}$. 
 Note that depending on the sign of $\det N$, the condition in \eqref{24} becomes a union of two conditions. The next section should be read with this in mind. 

\subsubsection{Coordinates} We now introduce suitable coordinates to measure how far $NJ^{-1}$ is from being orthogonal. In the vector space $V := M_2(\mathbb{Q})$ we introduce the two subspaces
$$W_{\pm} := \{N \in V : \| NJ^{-1} \| \pm 2\det NJ^{-1} = 0\}.$$     
One checks directly that 
\[ W_{\pm} = \left\{ \left(\begin{matrix} n_1 & n_2\\ n_3 & n_4 \end{matrix}\right) \in V :   \left(\begin{matrix} n_3\\ n_4 \end{matrix} \right)  = \mp \frac{1}{\det J} J^{\top}J \left(\begin{matrix} -n_2\\ n_1 \end{matrix} \right) =:\left(\begin{matrix} j_{1, \pm}(n_1, n_2) \\ j_{2, \pm}(n_1, n_2) \end{matrix} \right) \right\},\]
and it is clear that $V = W_+ \oplus W_-$.  To see that $W_{\pm}$ are indeed vector spaces, denote $A:=NJ^{-1}$. Then the defining property of the $W_{\pm}$ implies that $\tr AA^\top = 4 \det A A^\top$. This means that $AA^T = \lambda I$ for some $\lambda \ge 0$. Taking traces, we see that $\lambda = \mp \det A$. From this we easily get that $N \in W_{\pm}$ if and only if $J^{-1} J^{-\top} N^\top = \mp N^{\text{adj}}/\det J$, which is a linear condition.

So we can coordinize $V$ as
\begin{equation}\label{coord}
N = \left(\begin{matrix} a_+ + a_- & b_+ + b_-\\ j_{1, +}(a_+, b_+) +   j_{1, -}(a_-, b_-) &  j_{2, +}(a_+, b_+) +   j_{2, -}(a_-, b_-) \end{matrix}\right)
\end{equation}
with $a_+, a_-, b_+, b_-\in \mathbb{Q}$. In this way we obtain a linear map describing the change of variables $(n_1, n_2, n_3, n_4) \mapsto (a_+, a_-, b_+, b_-)$ with determinant $-1/4$. 

In these new coordinates we have
\begin{align} \label{short}
\| NJ^{-1} \| \pm 2\det NJ^{-1}  = (-b_{\mp}, a_{\mp}) \frac{J^{\top} J}{(\det J)^2} \left(\begin{matrix}-b_{\mp}\\ a_{\mp}\end{matrix}\right).
\end{align}
In particular, the condition $\| NJ^{-1} \|^2 \pm 2 \det NJ^{-1}  \preccurlyeq_{n} 1+  Z^{-2}$ is equivalent to $a_{\mp}, b_{\mp} \preccurlyeq_{n} 1+ 1/Z$. 

Depending on the sign of $\det N$, we will have two short variables and two long variables. For instance, when $\det N<0$, we have $\| NJ^{-1} \| - 2 |\det NJ^{-1}| = \| NJ^{-1} \| + 2\det NJ^{-1}$ and we obtain from \eqref{short} that $a_{-}, b_{-}  \preccurlyeq_{n} 1+ 1/Z$, which by \eqref{Z} is usually much shorter than the remaining two variables $a_{+}, b_{+}\preccurlyeq  k$. This is a reflection of the fact that $N$ is close to the two-dimensional subspace $W_+$. 
When $\det N>0$, the long and short variables will be interchanged. 

For later purposes we compute
\begin{equation}\label{diff}
\nabla_{a_+, b_+}  \| N J^{-1}\|^2 = 4J^{-1} J^{-\top}  \left(\begin{matrix} a_+\\ b_+ \end{matrix}\right)  , \quad \nabla_{a_+, b_+}  \det N J^{-1}  = -2J^{-1} J^{-\top}  \left(\begin{matrix} a_+\\ b_+ \end{matrix}\right). 
\end{equation}

Let us return to the notation $d = \det J$.  For $S \subseteq \mathbb{Q}^4$, we write $V[S] = \{N \in V: (a_+, a_-, b_+, b_-) \in S\}$ with respect to the   coordinates \eqref{coord}. Then clearly one has $M_2(\mathbb{Z}) \subseteq V[(\frac{1}{2d} \mathbb{Z})^4]$, and for volume reasons  $M_2(\mathbb{Z})$ is a sub-lattice of index $4d^4$ in $V[(\frac{1}{2d} \mathbb{Z})^4]$. Consequently, given $N_0 \in M_2(\mathbb{Z})$, there exist a vector $A_0 \in (\frac{1}{2d }\mathbb{Z})^4$ and a lattice $\Lambda \subseteq (\frac{1}{2}\mathbb{Z})^4$ of index $4d^4 \preccurlyeq 1$ such that
$$\{N \in M_2(\mathbb{Z}) : N \equiv N_0 \, (\text{mod } d)\} =  V[A_0 + \Lambda].$$
Note that the left hand side describes the summation condition in \eqref{prelim}. Recalling the present form of the $z$-integral \eqref{zint}, the term we have to bound is
\begin{align*}
    \frac{1}{k^2}&  \sum_{\substack{N \in  V[A_0 + \Lambda]\\ a_{\mp}, b_{\mp} \preccurlyeq_{n} 1+ 1/Z}} W_1\Big(\frac{\| N \|}{K_1}\Big) W_2\Big(\frac{\det N}{K_2^2}\Big)     \frac{1}{Z^{1/2} k} \int_0^{\infty} W_4\Big(\frac{z}{Z}; N\Big) E(z) \\
 &\times e\Big[\| N J^{-1} \|^2z - 2 \Big(\sqrt{(z \det NJ^{-1})^2 - \ell_0^2}  -  \ell_0 \arctan\Big(\frac{1}{\ell_0}\sqrt{ (z\det NJ^{-1} )^2 - \ell_0^2}\Big) \Big) \Big]\frac{dz}{z}
 \end{align*}
using the coordinates \eqref{coord} for $N$. At this point we can estimate trivially (by \eqref{Egen})
$$\preccurlyeq_{n} \frac{1}{k^2} k^2 \max\Big(1,  \frac{1}{Z^2}\Big) \frac{1}{Z^{1/2}k}  \min(Z^{1/2}, 1) = \frac{1}{\min(Z^{1/2}, Z^2) k}.$$ 
In particular, for the proof of Theorem \ref{thm1} we may assume that 
\begin{equation}\label{1/3}
   Z \ll k^{-1/3},
 \end{equation}  
    say, and hence $n= 2$ in \eqref{24}.

\subsubsection{Poisson summation} For notational simplicity let us only consider the case $a_-, b_- \preccurlyeq_{2} 1+ 1/Z$, the opposite case being completely identical. Recall this means that $\det NJ^{-1} < 0$. We now apply Poisson summation in the long variables $a_+, b_+$, but not in the short variables $a_-, b_-$. This was the whole point of introducing these coordinates. In this way it suffices to bound
\begin{align*}
\frac{1}{k^2}&  \sum_{\substack{ a_{-}, b_{-}\in \frac{2}{d}\mathbb{Z}\\ a_-, b_- \preccurlyeq_{2} 1+ 1/Z }} \sum_{\alpha, \beta \in \frac{1}{4d^4} \mathbb{Z}} \Big|\int_{\mathbb{R}}  \int_{\mathbb{R}} W_1\Big(\frac{\| N \|}{K_1}\Big) W_2\Big(\frac{\det N}{K_2^2}\Big)     \frac{1}{Z^{1/2} k} \int_0^{\infty} W_4\Big(\frac{z}{Z};N\Big) E(z) \\
 &\times e\Big[\| N J^{-1} \|^2z - 2 \Big(\sqrt{(z \det NJ^{-1})^2 - \ell_0^2}  -  \ell_0 \arctan\Big(\frac{1}{\ell_0}\sqrt{ (z\det NJ^{-1} )^2 - \ell_0^2}\Big) \Big) \Big]\frac{dz}{z} \\
 & \times e(\alpha  a_+ + \beta b_+) da_+\, db_+\Big|.
 \end{align*}
 Let,
\begin{align*}
\phi(a_+, b_+) 
=  &\| N J^{-1} \|^2z - 2 \Big(\sqrt{(z \det NJ^{-1})^2 - \ell_0^2}  -  \ell_0 \arctan\Big(\frac{1}{\ell_0}\sqrt{ (z\det NJ^{-1} )^2 - \ell_0^2}\Big) \Big) + \\
&+ \alpha  a_+ + \beta b_+
\end{align*}
be the phase. Recalling \eqref{diff},  we compute
\begin{align*}
\nabla_{a_+, b_+} \phi(a_+, b_+)&  = \left(\begin{matrix}\alpha\\ \beta\end{matrix}\right)   + z  \nabla_{a_+, b_+} \| N J^{-1} \|^2 - 2 \frac{\sqrt{(z \det NJ^{-1})^2 - \ell_0^2}}{\det NJ^{-1}}  \nabla_{a_+, b_+} \det NJ^{-1}\\
&= \left(\begin{matrix}\alpha\\ \beta\end{matrix}\right)  + 4z  \Big (1 -   \sqrt{ 1 - \frac{\ell_0^2}{z^2 (\det NJ^{-1})^2 }}    \Big)J^{-1} J^{-\top}  \left(\begin{matrix} a_+\\ b_+ \end{matrix}\right) \\
\end{align*}
and so 
\[ \nabla_{a_+, b_+} \phi(a_+, b_+) - \left(\begin{matrix}\alpha\\ \beta\end{matrix}\right)  \preccurlyeq \frac{1}{Zk}. \]
Moreover, 
%
 $$\frac{\partial^j}{\partial a^j_+} \phi(a_+, b_+) \preccurlyeq \frac{1}{k^j Z}$$
 for all fixed $j \geq 0$. 
 
  If $\alpha \not= 0$ (and hence $\alpha \succcurlyeq 1$), we apply Lemma \ref{integrationbyparts} (recall \eqref{flat} and \eqref{flatnew}) with
 $$R = |\alpha|, \quad  U \succcurlyeq_{3} k, \quad  Q \succcurlyeq k, \quad Y \preccurlyeq 1/Z$$
 to see that the $a_+$-integral is negligible.  Hence we may assume that $\alpha = 0$, and similarly we conclude $\beta = 0$. 

On the other hand, we have
$$
\Big \| J^{-1} J^{-\top}\left(\begin{matrix}    a_+ \\ b_+  \end{matrix}\right) \Big \| \approxeq \Big\| \left(\begin{matrix}    a_+ \\ b_+  \end{matrix}\right) \Big\|  \asymp  \| N \| \approxeq k  $$
by the support of the weight function $W_1$ and the fact that $a_-, b_- \preccurlyeq_{2} 1+ 1/Z$ with $Z$ satisfying \eqref{Z}. 
Thus in the case $\alpha = \beta = 0$ we have
$$\max\Big( \Big| \frac{\partial}{\partial a_+} \phi(a_+, b_+)\Big|,  \Big| \frac{\partial}{\partial b_+} \phi(a_+, b_+)\Big| \Big) \approxeq (kZ)^{-1} .$$
Once again we apply Lemma \ref{integrationbyparts} with
 $$R\succcurlyeq (kZ)^{-1}, \quad U \succcurlyeq_{3} k, \quad  Q \succcurlyeq k, \quad Y \preccurlyeq 1/Z$$
 to see by \eqref{1/3} that at least one of the $a_+, b_+$-integrals is negligible. 
 
 This completes the proof.

\end{document}